\DeclareMathAlphabet{\mymathbb}{U}{bbold}{m}{n}
\newtheorem{theorem}{Theorem}[section]
\newtheorem{prop}[theorem]{Proposition}
\newtheorem{lemma}[theorem]{Lemma}     
\newtheorem{fact}[theorem]{Fact}
\newtheorem{coro}[theorem]{Corollary}
\theoremstyle{definition}
\newtheorem{remark}[theorem]{Remark}
\newcommand{\ts}{\hspace{0.5pt}}
\newcommand{\nts}{\hspace{-0.5pt}}
\newcommand{\RR}{\mathbb{R}\ts}
\newcommand{\CC}{\mathbb{C}}
\newcommand{\ZZ}{\mathbb{Z}}
\newcommand{\NN}{\mathbb{N}}
\newcommand{\cA}{\mathcal{A}}
\newcommand{\cE}{\mathcal{E}}
\newcommand{\cO}{\mathcal{O}}
\newcommand{\ee}{\ts\mathrm{e}}
\newcommand{\dd}{\,\mathrm{d}\ts}
\newcommand{\ii}{\ts\mathrm{i}}
\newcommand{\bs}{\boldsymbol}
\newcommand{\trans}{{\scriptscriptstyle{\mathsf{T}}}}
\newcommand{\tri}{_{\nts\triangle}{\!}}
\newcommand{\ad}{{\,\widetilde{\!\mathrm{ad}\!}\,}}
\newcommand{\one}{\mymathbb{1}}
\newcommand{\nix}{\mymathbb{0}}
\newcommand{\Mat}{\mathrm{Mat}}
\newcommand{\imag}{\mathrm{Im\ts}}
\newcommand{\tr}{\mathrm{tr}}
\newcommand{\exend}{\hfill$\Diamond$}
\newcommand{\defeq}{\mathrel{\mathop:}=}
\newcommand{\myfrac}[2]{\frac{\raisebox{-2pt}{$#1$}}
  {\raisebox{0.5pt}{$#2$}}}
\begin{document}

\title[Equal{\ts}-input matrices and time{\ts}-inhomogeneous Markov
  flows]{On the algebra of equal{\ts}-input matrices \\[1.5mm]
  in time{\ts}-inhomogeneous Markov flows}

\author{Michael Baake}
\address{Fakult\"at f\"ur Mathematik, Universit\"at Bielefeld, \newline
       \indent  Postfach 100131, 33501 Bielefeld, Germany}

\author{Jeremy Sumner}
\address{School of Natural Sciences, Discipline of Mathematics,
         University of Tasmania,
    \newline \indent Private Bag 37, Hobart, TAS 7001, Australia}

\begin{abstract}
  Markov matrices of equal-input type constitute a widely used model
  class. The corresponding equal-input generators span an interesting
  subalgebra of the real matrices with zero row sums.  Here, we
  summarise some of their amazing properties and discuss the
  corresponding Markov embedding problem, both homogeneous and
  inhomogeneous in time. In particular, we derive exact and explicit
  solutions for time-inhomogeneous Markov flows with non-commuting
  generator families of equal-input type and beyond.
\end{abstract}

\keywords{Markov matrices and generators, matrix algebras, embedding problem}
\subjclass[2010]{60J27, 34A05, 15A16}

\maketitle

\section{Introduction}

Markov processes are fundamental throughout probability theory and its
applications. They often show up as Markov chains, both in discrete
and in continuous time; see \cite{Norris} for a comprehensive
exposition. When the state space is finite, one most effectively works
with Markov matrices (non-negative matrices with unit row sums) or
Markov semigroups. In the case of continuous time, the formulation is
usually based on a Markov generator, $Q$ say, and the relevant
(commutative) semigroup is written as $ \{ M(t) : t \geqslant 0 \}$
with $M (t) = \ee^{t \ts Q}$. Markov generators have non-negative
off-diagonal entries and zero row sums, and are also known as
\emph{rate matrices} (we will use both terms synonymously).  When only
a single Markov matrix $M$ is given, it is then a natural question
whether it can occur in such a semigroup, which is known as the
\emph{Markov embedding problem}; see \cite{Elfving,King} for early
work and \cite{Davies,BS1} and references therein for some recent
developments.

While this already is a hard problem, and far from being solved in
sufficient generality, it has one drawback in that it only asks for
the embedding into a time-homogeneous process. In reality, there is
often no good reason for the assumption that a time-independent
generator should exist. Further, in the time-dependent case of
commuting matrices, one faces the issue of common eigenvectors, and
hence strong restrictions on the possible equilibria. Indeed, some
experimental evidence \cite{Cox} around non-stationarity thus points
towards the need for time-dependent generators that do \emph{not}
commute with one another. Consequently, one should also look at the
embeddability into a time-inhomogeneous process with, in general,
non-commuting generators. For finite state spaces, a partial answer is
given by Johansen's theorem \cite{Joh73}, which states that any Markov
matrix that is embeddable in such a Markov flow can be approximated
arbitrarily well by the product of \emph{finitely many} homogeneously
embeddable matrices.

While analysing part of this in previous work \cite{BS2,BS3}, it
became clear that there currently is a rather limited understanding of
the structure of Markov flows, and increasingly so with growing
dimension. In particular, when we look at the Cauchy problem
$\dot{M} = M \ts Q$ with $M(0)=\one$, which is the Kolmogorov forward
equation of the underlying process, there are hardly any explicit
results beyond the case where the matrices $Q(t)$ all commute with one
another, in which case the flow is given by
$M(t) = \exp\bigl( \, \int_{0}^{t} Q(\tau) \dd \tau \bigr)$ with
$t\geqslant 0$, as is straightforward to verify. Once again, there is
usually no good reason for the commutativity assumption, and the main
goal of this paper is to derive some solvable cases that go beyond,
where we start from some models that are widely used in
applications. Here, we also aim at a better understanding of the
corresponding Markov flows.

Now, the existence of a solution to the Cauchy problem, as well as its
uniqueness under some standard continuity assumptions on the generator
family, is guaranteed by the Picard--Lindel\"{o}f theorem of ordinary
differential equations (ODEs); see \cite{A,Walter} for classic
sources.  Also, one can use the \emph{Peano--Baker series} (PBS) to
write down a convergent series expansion of the solution, which can be
quite helpful in understanding various aspects of the solution; see
\cite{BSch} and references therein for a summary. In addition, there
is another approach via the \emph{Magnus expansion} (ME), see
\cite{Magnus,ME}, which harvests the observation that, since
$M(0)=\one$ possesses a real matrix logarithm (in fact, the principal
one), this will remain true at least for small values of $t > 0$. So,
one can write $M(t) = \exp \bigl( R(t) \bigr)$ for some interval
$[0, T]$, with $T>0$ and $R(0) = \nix$, and derive an ODE for the
matrix function $R$ from it.

This is possible via some underlying Lie theory, as summarised in
\cite{ME}, and the relevant ODE is based on a series expansion in
terms of iterated matrix commutators (or Lie brackets). As with the
PBS, beyond the trivial case of commuting generators, it is rarely
possible to compute this series explicitly, but there are instances
known where it is; compare \cite{ME} and references therein. It is
perhaps surprising that our featured model class of equal-input
matrices, which we take from applications in phylogenetics
\cite{Steel}, has a sufficiently strong algebraic structure (see
Fact~\ref{fact:ideal} below) that leads to a natural and wide class of
non-commuting families where an explicit solution is also possible. In
fact, already in \cite{BS2}, we highlighted this structure, when we
demonstrated that the equal-input matrices allow for an explicit
formula of the \emph{Baker--Campbell--Hausdorff} (BCH) formula in this
class, where we refer to the \textsc{WikipidiA} for some background on
BCH.  Below, we employ the PBS and the ME to derive some explicit
solutions in closed form. \vspace{0.5mm}

The paper is organised as follows. We collect some notions and
results, in particular around equal-input matrices, in
Section~\ref{sec:prelim}, before we solve the Cauchy problem for the
case that the generator family consists solely of general equal-input
matrices (Section~\ref{sec:EI}).  Generically, these matrices do
\emph{not} commute with one another, but have a sufficiently strong
algebraic structure to enable an explicit solution. A close inspection
of the algebraic structure reveals that even more is possible, due to
a particular ideal within the algebra of zero row sum matrices. This
leads to two more general classes where we obtain a full solution,
which are treated step by step in Section~\ref{sec:general}. As we
proceed, for the sake of readability, we sometimes employ a style of
presentation where we first derive the results prior to their formal
statement as a theorem. Some material on the PBS and the ME is briefly
summarised in the Appendix.

\section{Notation and preliminaries}\label{sec:prelim}

For a row vector $\bs{x} = (x^{}_{1} , \ldots , x^{}_{d}) \in \RR^d$,
we define the \emph{equal-rows matrix}
\[
    C_{\bs{x}} \, \defeq \, \begin{pmatrix}
    x^{}_{1} & \cdots & x^{}_{d} \\ \vdots & \ddots & \vdots \\
    x^{}_{1} & \cdots & x^{}_{d} \end{pmatrix} ,
\]
which has rank $0$ for $\bs{x}=\bs{0}$ and rank $1$ otherwise.  Its
\emph{spectrum} is $\sigma (C_{\bs{x}}) = \{ x, 0, \ldots, 0 \}$ in
multi-set notation, with
\[
    x \, \defeq \, \tr (C_{\bs{x}}) \, = x^{}_{1} + \dots + x^{}_{d} 
\]    
denoting the \emph{summatory parameter} of $C_{\bs{x}}$, while
$\bs{x}$ is called the \emph{parameter vector} of $C_{\bs{x}}$. Note
that $x=0$ is possible for $\bs{x}\ne \bs{0}$. An important relation
is
\begin{equation}\label{eq:C-product}
    C_{\bs{x}} C_{\bs{y}} \, = \, x \ts C_{\bs{y}} \ts ,
\end{equation}
which holds for arbitrary $\bs{x}, \bs{y} \in \RR^d$ and has the
following consequence.

\begin{fact}\label{fact:nilpotent}
  Any matrix\/ $C_{\bs{x}}$ with\/ $x=\tr (C_{\bs{x}})=0$ is
  nilpotent, with\/ $C^{}_{\bs{0}} = \nix$ and nilpotency degree\/ $2$
  in all other cases. When\/ $\bs{x}\ne \bs{0}$, the null space of\/
  $C_{\bs{x}}$ has dimension\/ $d-1$. Whenever\/ $x\ne 0$,
  $C_{\bs{x}}$ is diagonalisable. Otherwise, if\/ $\bs{x}\ne \bs{0}$
  but\/ $d\geqslant 2$ and\/ $x=0$, the Jordan normal form of\/
  $C_{\bs{x}}$ has precisely one elementary Jordan block of the form\/
  $\left( \begin{smallmatrix} 0 & 1 \\ 0 & 0 \end{smallmatrix}
  \right)$ and is diagonal otherwise.
\end{fact}

\begin{proof}
  The first claim follows immediately from Eq.~\eqref{eq:C-product} by
  taking $\bs{y}=\bs{x}$.  When $\bs{x}\ne\bs{0}$, the relation
  $(v^{}_{1}, \ldots , v^{}_{d}) C_{\bs{x}} =\bs{0}$ is satisfied
  precisely by all $\bs{v}$ with $v^{}_{1} + \ldots + v^{}_{d} = 0$,
  which form a subspace of $\RR^d$ of dimension $(d-1)$. When
  $x\ne 0$, hence also $\bs{x}\ne \bs{0}$, we get one extra
  eigenvector for the eigenvalue $x$, and thus diagonalisability of
  $C_{\bs{x}}$. In the remaining case, the consequence on the Jordan
  normal form is clear.
\end{proof}

When $x=1$ and all entries of $\bs{x}$ are non-negative, $C_{\bs{x}}$
is a Markov matrix, but of little interest as it is singular for
$d\geqslant 2$.  A more interesting set of Markov matrices for
$d\geqslant 2$, with numerous applications in phylogeny \cite{Steel}
for instance, is defined by
\begin{equation}\label{eq:def-Markov-EI}
     M_{\bs{x}}  \, = \, (1-x)\one + C_{\bs{x}}
\end{equation}
when $\bs{x} \geqslant \bs{0}$ (meaning $x_i \geqslant 0$ for all
$1\leqslant i \leqslant d$) with $1 + x_{i} \geqslant x $ for all $i$.
By slight abuse of notation, $x$ is called the \emph{summatory
  parameter} of $M_{\bs{x}}$.  The condition that all row sums are $1$
is satisfied automatically. These are the well-known
\emph{equal-input} Markov matrices, which have $d$ degrees of freedom
and form a closed convex set.  Let us describe them in a little more
detail. The non-negativity condition of the matrix elements restricts
$x$ to the interval $\bigl[ 0, \frac{d}{d-1}\bigr]$, with the maximal
value being attained at
$x^{}_{1} = x^{}_{2} = \ldots = x^{}_{d} = \frac{1}{d-1}$.  This
implies the following result, where $\bs{1} \defeq (1, \ldots , 1)$
and $\bs{e}_i$ denotes the standard unit row vector with $1$ in
position $i$ and $0$ everywhere else; see \cite[Lemma~2.8]{BS2}.

\begin{fact}\label{fact:ei-convex}
  The equal-input Markov matrices for fixed\/ $d\geqslant 2$ form a\/
  $d$-dimensional convex set that is closed and has\/ $d+2$ extremal
  elements, namely the matrices\/ $C_{\bs{e}^{}_i}$ with\/
  $1 \leqslant i \leqslant d$ together with\/ $\one$ and\/
  $\frac{1}{d-1} \ts ( C_{\bs{1}} - \one) $.  \qed
\end{fact}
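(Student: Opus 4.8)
The plan is to exploit the affine parametrisation $\bs{x} \mapsto M_{\bs{x}} = (1-x)\one + C_{\bs{x}}$, which identifies the set of equal-input Markov matrices with the polytope $P \defeq \{ \bs{x} \in \RR^d : \bs{x} \geqslant \bs{0},\ x_i + 1 \geqslant x \text{ for all } i \}$ in parameter space. Since $\bs{x} \mapsto M_{\bs{x}}$ is an injective affine map (as $C_{\bs{x}}$ already determines $\bs{x}$, and $x$ is a linear functional of $\bs{x}$), it maps extreme points to extreme points and vice versa; hence it suffices to determine the vertices of $P$ and then translate them back. First I would observe that $P$ is cut out by the $2d$ inequalities $x_i \geqslant 0$ and $x_i - x \geqslant -1$, i.e.\ $\sum_{j\ne i} x_j \leqslant 1$, for $1 \leqslant i \leqslant d$. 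A vertex of a $d$-dimensional polytope is obtained by making $d$ of these constraints active (with the resulting linear system nondegenerate) and checking feasibility of the rest.

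The key combinatorial step is the case analysis on how many of the ``upper'' constraints $\sum_{j\ne i} x_j = 1$ are active at a vertex. If none is active, all $d$ lower constraints $x_i = 0$ must hold, giving $\bs{x} = \bs{0}$, i.e.\ $M_{\bs{0}} = \one$. If exactly one upper constraint, say for index $i_0$, is active, the remaining $d-1$ active constraints among the lower ones force $x_j = 0$ for $d-1$ choices of $j$; feasibility ($x_{i_0}\geqslant 0$ and the other upper constraints) then forces the single free coordinate to sit at a unit vector, yielding $\bs{x} = \bs{e}^{}_{i}$ for some $i$ and hence $M_{\bs{e}^{}_i} = C_{\bs{e}^{}_i}$ (the $1-x = 0$ term drops out). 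If two or more upper constraints are active, subtracting $\sum_{j\ne i}x_j = 1$ from $\sum_{j\ne i'}x_j = 1$ gives $x_{i'} = x_i$ for all such pairs; pushing this through, together with the requirement that enough constraints be active to pin down all $d$ coordinates, forces $x_1 = \dots = x_d$ and then the common value is $\frac{1}{d-1}$, giving $\bs{x} = \frac{1}{d-1}\bs{1}$ and $M_{\bs{x}} = \bigl(1 - \frac{d}{d-1}\bigr)\one + \frac{1}{d-1} C_{\bs{1}} = \frac{1}{d-1}(C_{\bs{1}} - \one)$.

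Finally I would verify that each of these $d+2$ candidates is genuinely a vertex (not merely a feasible point where $d$ constraints happen to meet), which amounts to checking that the active constraint gradients span $\RR^d$ in each case — immediate for $\bs{0}$ and the $\bs{e}^{}_i$, and a short rank computation for $\frac{1}{d-1}\bs{1}$, where the $d$ active upper constraints have gradient matrix $C_{\bs{1}} - \one$ restricted appropriately, which is invertible. Convexity and closedness of $P$ (hence of its image) is clear since $P$ is an intersection of finitely many closed half-spaces and is bounded, the bound $x \in [0, \frac{d}{d-1}]$ having been recorded already in the text. The main obstacle is the bookkeeping in the ``two or more active upper constraints'' case: one must argue that partial activation (say $k$ upper constraints with $2 \leqslant k < d$) cannot yield a new vertex, because the equalities $x_i = x_{i'}$ among the active indices plus the $d-k$ remaining lower constraints either are inconsistent with the strict feasibility of the inactive constraints or collapse to the fully symmetric vertex already found; making this dichotomy airtight is the one place where a little care is needed, but it is elementary linear algebra throughout. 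An alternative, arguably cleaner, route is to cite \cite[Lemma~2.8]{BS2} directly for the vertex enumeration and merely indicate the polytope picture above as motivation.
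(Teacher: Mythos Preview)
The paper does not actually prove this fact: the statement ends with a bare \qed, and the preceding sentence refers the reader to \cite[Lemma~2.8]{BS2}. So there is no in-paper argument to compare against, and your final remark --- that one could simply cite \cite[Lemma~2.8]{BS2} --- is precisely what the paper does.

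Your polytope approach is the natural one and is almost certainly what lies behind the cited lemma. One point to tighten: in the ``two or more upper constraints active'' case, your conclusion that this ``forces $x^{}_{1} = \dots = x^{}_{d}$'' is too strong as written. For $d\geqslant 3$, the vertex $\bs{e}^{}_{k}$ has exactly $d-1$ upper constraints tight (namely $\sum_{j\ne i} x_j = 1$ for every $i\ne k$), yet its coordinates are not all equal. What is true is that selecting $k\geqslant 2$ upper constraints together with $d-k$ lower constraints and solving yields either an infeasible point, a vertex $\bs{e}^{}_{k}$ already found in your previous case, or (when all $d$ upper constraints are taken) the all-equal point $\frac{1}{d-1}\bs{1}$. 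So the case produces no \emph{new} vertex, which is what you need. You already flag this bookkeeping as the delicate step; just rephrase the conclusion from ``forces all coordinates equal'' to ``yields no vertex beyond those already listed''.
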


The matrices $M_{\bs{x}}$ where all entries of $\bs{x}$ are equal are
called the \emph{constant-input} matrices. Note that in
Eq.~\eqref{eq:def-Markov-EI}, independently of whether $M_{\bs{x}}$ is
Markov or not, one always has the spectrum
$\sigma (M_{\bs{x}}) = \{ 1, 1 - x, \ldots , 1 - x \}$ and thus
$\det (M_{\bs{x}}) = (1-x)^{d-1}$.

Of particular interest are the equal-input counterparts with zero row
sums, as defined by
\begin{equation}\label{eq:def-Q}
    Q_{\bs{x}} \, = \, - x \ts \one + C_{\bs{x}} \ts ,
\end{equation}
which satisfy $\lambda Q_{\bs{x}} = Q_{\lambda\bs{x}}$ for all
$\lambda\in\RR$ and $\bs{x}\in\RR^d$. These matrices all lie in the
non-unital, real matrix algebra of zero row sum matrices,
\[
  \cA^{}_{\ts 0} \, = \, \cA^{(d)}_{\, 0} \, \defeq \, \bigl\{ A \in
  \Mat (d,\RR) : \textstyle{\sum_{j=1}^{d}} A_{ij} = 0 \text{ for all }
  1\leqslant i \leqslant d \ts \bigr\} .
\]

In fact, all matrices of the form \eqref{eq:def-Q},
\[
  \cE^{}_{0} \, = \, \cE^{(d)}_{\, 0} \, \defeq \, \big\{ Q^{}_{\bs{x}} :
  \bs{x} \in \RR^d \big\} \ts ,
\]
form a $d$-dimensional subalgebra of $\cA^{(d)}_{\, 0}$ with
interesting properties.\footnote{From now on, whenever the dimension
  is arbitrary but fixed, we shall simply write $\cA^{}_{\ts 0}$ and
  $\cE^{}_{0}$.}  Since $\nix \in \cE^{}_{0}$, the vector space
property of $\cE^{}_{0}$ is clear, while \eqref{eq:C-product} and
\eqref{eq:def-Q} together give
\begin{equation}\label{eq:Q-product}
    Q_{\bs{x}} Q_{\bs{y}} \, = \, - y \ts Q_{\bs{x}} \ts ,
\end{equation}
which implies that $\cE^{}_{0}$ is an algebra. It is not unital,
because it has no two-sided unit, though it has right units \cite{CS},
which are precisely the matrices $Q_{\bs{x}}$ with $x=-1$, as follows
from \eqref{eq:Q-product}.  Within $\cE^{}_{0}$, one also has the
special class of constant-input matrices that are the multiples of
$J_{d} -\one $, where $J_d = \frac{1}{d} \ts C^{}_{\bs{1}}$.  They
will show up a number of times. Another important subset, comprising
nilpotent elements only, is
\[
  \cE^{\prime}_{0} \, \defeq \, \{ Q_{\bs{x}} \in \cE^{}_{0} : x=0 \}
  \, = \, \{ C_{\bs{x}} : \bs{x} \in \RR^d \text{ and } \tr
  (C_{\bs{x}}) = 0 \} \ts ,
\]
which is a subalgebra of $\cA^{}_{0}$. Clearly, for any
$C_{\bs{x}} \in \cE^{\prime}_{0}$, one has
$A \ts C_{\bs{x}} = \nix \in\cE^{\prime}_{0}$ for all $A\in
\cA^{}_{0}$. Also, if we write $A\in\cA^{}_{\ts 0}$ as a collection of
column vectors,
$A = (\bs{a}^{\trans}_{1}, \ldots, \bs{a}^{\trans}_{d})$, we get
$C_{\bs{x}} A = C_{\bs{y}}$ with
$y^{}_i = \bs{x} \cdot \bs{a}^{\trans}_{i} $.  Since
$\tr (C_{\bs{x}} A) = \tr (A \ts C_{\bs{x}}) = \tr (\nix) = 0$, we see
that $C_{\bs{y}}\in\cE^{\prime}_{0}$. Further, for elements from
$\cE^{\prime}_{0}$, Eq.~\eqref{eq:C-product} simplifies to
$C_{\bs{x}} C_{\bs{y}}=\nix$. Together, this shows the following.

\begin{fact}\label{fact:ideal}
  The subalgebra\/ $\cE^{\prime}_{0}$ is a two-sided nil ideal, both
  in\/ $\cE^{}_{0}$ and in\/ $\cA^{}_{0}$.  Further, for any\/
  $A \in \cA^{}_{\ts 0}$ and $C \in \cE^{\prime}_{0}$, one has
  $A \ts C =\nix$.  \qed
\end{fact}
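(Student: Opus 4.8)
The plan is to verify, in order, that $\cE^{\prime}_{0}$ is a linear subspace closed under multiplication, that it absorbs products with arbitrary elements of $\cA^{}_{0}$ from both sides, and that all its elements are nilpotent; the last point will come essentially for free. Indeed, one has the inclusions $\cE^{\prime}_{0} \subseteq \cE^{}_{0} \subseteq \cA^{}_{0}$, the first being immediate from the defining condition $\tr (C_{\bs{x}}) = 0$ and the second having already been recorded above. By Fact~\ref{fact:nilpotent}, every $C_{\bs{x}} \in \cE^{\prime}_{0}$ is nilpotent, of degree at most $2$; so once the ideal property is in place, $\cE^{\prime}_{0}$ is automatically a nil ideal, and the nilpotency part needs no further work.

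The key to the absorption property, and to the additional assertion, is the outer-product identity $C_{\bs{x}} = \bs{1}^{\trans} \bs{x}$, together with the observation that $A \in \cA^{}_{0}$ is precisely the statement $A \ts \bs{1}^{\trans} = \bs{0}^{\trans}$. I would then argue as follows. On the left, $A \ts C_{\bs{x}} = (A \ts \bs{1}^{\trans})\, \bs{x} = \nix$ for every $A \in \cA^{}_{0}$, which is the final assertion of the Fact and in particular gives $\cA^{}_{0}\ts \cE^{\prime}_{0} = \{\nix\} \subseteq \cE^{\prime}_{0}$. On the right, $C_{\bs{x}} A = \bs{1}^{\trans} (\bs{x} A) = C_{\bs{y}}$ with $\bs{y} \defeq \bs{x} A$, and $\tr (C_{\bs{y}}) = \bs{y}\ts \bs{1}^{\trans} = \bs{x} \, (A \ts \bs{1}^{\trans}) = 0$, so $C_{\bs{y}} \in \cE^{\prime}_{0}$ and hence $\cE^{\prime}_{0}\ts \cA^{}_{0} \subseteq \cE^{\prime}_{0}$; equivalently, one may simply invoke $\tr (C_{\bs{x}} A) = \tr (A \ts C_{\bs{x}}) = \tr (\nix) = 0$. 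This establishes that $\cE^{\prime}_{0}$ is a two-sided ideal in $\cA^{}_{0}$, and since $\cE^{\prime}_{0} \subseteq \cE^{}_{0} \subseteq \cA^{}_{0}$, the same products restricted to $\cE^{}_{0}$ show it is an ideal in $\cE^{}_{0}$ as well.

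It remains only to note that $\cE^{\prime}_{0}$ is a subalgebra: closure under addition and scalar multiplication is clear from linearity of $\bs{x} \mapsto C_{\bs{x}}$, and closure under products is the special case $A = C_{\bs{y}} \in \cE^{\prime}_{0}$ of the above, where in fact $C_{\bs{x}} C_{\bs{y}} = \nix$ directly from \eqref{eq:C-product} because then $x = 0$. I do not anticipate any genuine obstacle: each step is a one-line matrix identity, and the only input beyond elementary linear algebra, namely the product rule \eqref{eq:C-product} and the nilpotency statement of Fact~\ref{fact:nilpotent}, is already available. The one point worth stating cleanly is the outer-product form $C_{\bs{x}} = \bs{1}^{\trans}\bs{x}$, which simultaneously trivialises both the left- and the right-absorption; the alternative entrywise computation works just as well but is marginally more cumbersome.
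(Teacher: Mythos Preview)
Your proof is correct and follows essentially the same route as the paper: the paper also derives $A\ts C_{\bs{x}}=\nix$ from the zero row sum condition, shows $C_{\bs{x}}A=C_{\bs{y}}$ (via the column decomposition of $A$, where you use the equivalent outer-product form $C_{\bs{x}}=\bs{1}^{\trans}\bs{x}$), and then concludes $\tr(C_{\bs{y}})=0$ by the same cyclic trace identity $\tr(C_{\bs{x}}A)=\tr(A\ts C_{\bs{x}})=0$. Your explicit use of the rank-one factorisation is a mild notational streamlining, not a different argument.
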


A special case of \eqref{eq:Q-product} is
$Q^{2}_{\bs{x}} = -x \ts Q^{}_{\bs{x}}$, and hence
$Q^{n}_{\bs{x}} = (-x)^{n-1} Q^{}_{\bs{x}}$ for $n\in\NN$, which
results in a simple formula for its time-scaled exponential,
\begin{equation}\label{eq:expo-1}
\begin{split}
  M(t) \, & \defeq \,\ee^{\ts t \ts Q_{\bs{x}}} \, = \,
    \one + \sum_{n=1}^{\infty}
    \myfrac{t^n}{n!} \ts (-x)^{n-1} Q_{\bs{x}} \, = \, \one + 
    \myfrac{1 - \ee^{-tx}}{x} \ts Q_{\bs{x}} \\ 
    & \, = \, \ee^{-tx} \ts \one + \myfrac{1-\ee^{-tx}}{x} 
    C_{\bs{x}} \, = \, M_{\bs{c}(t)} \ts ,
\end{split}
\end{equation}
with $\bs{c}(t) = \frac{1-\ee^{-tx}}{x}\ts \bs{x}$ and
$c(t) = 1- \ee^{-tx}$. This formula also holds when $x=0$, then with
the appropriate limits via de l'Hospital's rule. So, each $M(t)$ is an
equal-input matrix with row sum $1$ and (time-dependent) summatory
parameter $c (t) = 1-\ee^{-tx}$. In particular, when $Q_{\bs{x}}$ is a
Markov generator, which happens if and only if
$\bs{x}\geqslant \bs{0}$, the set $ \{ M(t) : t\geqslant 0 \}$ defines
a (commutative) semigroup of Markov matrices.  Since $M(0)=\one$, it
is actually a monoid.

Let us mention an interesting property of equal-input matrices in the
context of the BCH formula.  Given
$Q_{\bs{x}}, Q_{\bs{y}}\in\cE^{}_{0}$, they will generally not
commute, and neither will their exponentials. However, the product
$\ee^{Q_{\bs{x}}} \ee^{Q_{\bs{y}}}$ is still of equal-input type and
has a real logarithm, which can be given explicitly as a linear
combination in $Q_{\bs{x}}$ and $Q_{\bs{y}}$; see
\cite[Thm.~2.15]{BS2}.  To formulate it, we define the positive
function $h$ on $\RR$ by $h(u) = \frac{1-\ee^{-u}}{u}$ for $u\ne 0$
and $h(0)=1$, which is the continuous extension to $u=0$.

\begin{fact}\label{fact:EI-BCH}
  Let\/ $Q_{\bs{x}} , Q_{\bs{y}} \in \cE^{}_{0}$. Then, the product of
  their exponentials has a unique real logarithm of equal-input type.
  The latter is the principal matrix logarithm and reads
\[
   \log \bigl( \ee^{Q_{\bs{x}}} \ee^{Q_{\bs{y}}}\bigr) \, = \,
   \myfrac{1}{h(x+y)} \bigl( h(x) \ts \ee^{-y} Q_{\bs{x}} + h(y) 
   \ts Q_{\bs{y}}\bigr).
\]
\end{fact}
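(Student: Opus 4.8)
The plan is to exploit the remarkably simple form of exponentials inside $\cE^{}_{0}$: specialising Eq.~\eqref{eq:expo-1} to $t=1$ gives $\ee^{Q_{\bs{z}}} = \one + h(z)\ts Q_{\bs{z}}$ for every $Q_{\bs{z}}\in\cE^{}_{0}$, which turns the whole identity into linear algebra inside the $d$-dimensional algebra $\cE^{}_{0}$, with no iterated commutators to control.

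First, I would multiply out $\ee^{Q_{\bs{x}}}\ee^{Q_{\bs{y}}} = \bigl( \one + h(x)\ts Q_{\bs{x}} \bigr)\bigl( \one + h(y)\ts Q_{\bs{y}} \bigr)$ and simplify the cross term through \eqref{eq:Q-product}, i.e.\ $Q_{\bs{x}} Q_{\bs{y}} = -y\ts Q_{\bs{x}}$, together with the elementary identity $1 - y\ts h(y) = \ee^{-y}$. This collapses to $\ee^{Q_{\bs{x}}}\ee^{Q_{\bs{y}}} = \one + h(x)\ts \ee^{-y} Q_{\bs{x}} + h(y)\ts Q_{\bs{y}}$. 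Because $\cE^{}_{0}$ is a real vector space, the right-hand side equals $\one + Q_{\bs{w}}$ with $\bs{w} = h(x)\ts\ee^{-y} \bs{x} + h(y)\ts\bs{y}$, whose summatory parameter, using $x\ts h(x) = 1-\ee^{-x}$, is $w = (1-\ee^{-x})\ts\ee^{-y} + (1-\ee^{-y}) = 1 - \ee^{-(x+y)}$.

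Next I would seek a logarithm of equal-input type, that is, some $R = Q_{\bs{v}}\in\cE^{}_{0}$ with $\ee^{R} = \one + Q_{\bs{w}}$. By \eqref{eq:expo-1} this requires $Q_{h(v)\bs{v}} = Q_{\bs{w}}$; since, for $d\geqslant 2$, the map $\bs{z}\mapsto Q_{\bs{z}}$ is injective (in each column $j$, all off-diagonal entries of $Q_{\bs{z}}$ equal $z_j$, so $\bs z$ is recovered from $Q_{\bs z}$), we get $h(v)\ts\bs{v} = \bs{w}$. Comparing summatory parameters yields $1-\ee^{-v} = w = 1 - \ee^{-(x+y)}$, hence $v = x+y$ by injectivity of $u\mapsto \ee^{-u}$, and then $\bs{v} = \bs{w}/h(x+y)$, which is exactly the stated formula. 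The same chain of implications shows that this is the only equal-input logarithm, since $v$ and then $\bs{v}$ are forced. Finally, to identify $R$ with the principal matrix logarithm, I note that $\bs{v}$ is a scalar multiple of $\bs{w}$, so $R = \frac{1}{h(x+y)}\bigl( A - \one \bigr)$ with $A \defeq \ee^{Q_{\bs{x}}}\ee^{Q_{\bs{y}}}$, while $\sigma(R) = \sigma(Q_{\bs{v}}) = \{ 0, -(x+y), \ldots, -(x+y) \}\subset\RR$ and $\sigma(A) = \{ 1, \ee^{-(x+y)}, \ldots, \ee^{-(x+y)} \}$ has no eigenvalue on $(-\infty, 0]$; since such a matrix possesses a unique logarithm whose spectrum lies in the strip $\{\, z : |\imag z| < \pi \,\}$, namely its principal one, and $\sigma(R)$ trivially lies there, $R$ is the principal logarithm.

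The computation is essentially mechanical once Eq.~\eqref{eq:expo-1} is in hand; the step needing the most care is the last one, where one has to keep the spectra straight — including the single elementary $2\times 2$ Jordan block that appears when $x+y = 0$, by Fact~\ref{fact:nilpotent} — to be certain that the real equal-input matrix produced is the principal logarithm of $A$ rather than merely one of its several logarithms.
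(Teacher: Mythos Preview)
Your argument is correct. The computation of the product $\ee^{Q_{\bs{x}}}\ee^{Q_{\bs{y}}}=\one+Q_{\bs{w}}$ with $\bs{w}=h(x)\ee^{-y}\bs{x}+h(y)\bs{y}$ and $w=1-\ee^{-(x+y)}$ matches the paper exactly. Where you diverge is in obtaining the logarithm: the paper evaluates the power series $\log(\one+Q_{\bs{w}})=\sum_{n\geqslant 1}\frac{(-1)^{n-1}}{n}Q_{\bs{w}}^{\,n}$ using $Q_{\bs{w}}^{\,n}=(-w)^{n-1}Q_{\bs{w}}$, which gives the formula directly but only for $\lvert w\rvert<1$, and then invokes the integral representation of the principal logarithm together with analytic continuation to cover all $x,y$. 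You instead turn the problem around and \emph{solve} $\ee^{Q_{\bs{v}}}=\one+Q_{\bs{w}}$ via the exponential formula, which immediately yields $v=x+y$ and $\bs{v}=\bs{w}/h(x+y)$ without any restriction on $w$, and then identify $Q_{\bs{v}}$ as the principal logarithm through the spectral characterisation (the unique logarithm with spectrum in the strip $\lvert\imag z\rvert<\pi$). Your route is a little more elementary in that it sidesteps the convergence issue and the analytic-continuation step entirely; the paper's route, on the other hand, makes the connection to the standard power-series definition of $\log(\one+\,\cdot\,)$ explicit. Both arrive at the same uniqueness claim via the injectivity of $\bs{a}\mapsto Q_{\bs{a}}$.
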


\begin{proof}
  Since $\ee^{Q_{\bs{x}}} = \one + h(x) \ts Q_{\bs{x}}$ and
  $\ee^{Q_{\bs{y}}} = \one + h(y) Q_{\bs{y}}$ by \eqref{eq:expo-1},
  with our previous convention for $x$ and $y$, one finds
\[
  \ee^{Q_{\bs{x}}} \ee^{Q_{\bs{y}}} \, = \, \one + h(x) \ts Q_{\bs{x}}
  + h(y) \ts Q_{\bs{y}} + h(x) h(y) \ts Q_{\bs{x}} Q_{\bs{y}} \,
  \overset{\eqref{eq:Q-product}}{=} \, \one + h(x) \ts \ee^{-y}
  Q_{\bs{x}} + h(y) \ts Q_{\bs{y}} \, = \, \one + Q_{\bs{z}}
\]  
  with the new parameters 
\[
   \bs{z} \, = \, h(x) \ee^{-y} \bs{x} + h(y) \bs{y}
   \qquad \text{and} \qquad z \, = \, 1 - \ee^{-(x+y)} .
\]  
Recall that $Q^{n}_{\bs{z}} = (-z)^{n-1} Q^{}_{\bs{z}}$ holds for
$n\in\NN$.  Then, for $\lvert z \rvert < 1$, which holds when
$x+y\geqslant 0$, we can compute
\[
  \log ( \one + Q_{\bs{z}}) \, = \sum_{n\geqslant 1}
  \myfrac{z^{n-1}}{n} \ts Q_{\bs{z}} \, = \, \myfrac{ \log (1-z)}{-z}
  \ts Q_{\bs{z}} \, = \, \myfrac{1}{h (x+y)} \ts Q_{\bs{z}} \, = \,
  Q_{\bs{z}/h(x+y)}\ts ,
\]
which is of the form claimed. This now has to be extended to all $x$
and $y$.
   
Since $\sigma (\one + Q_{\bs{z}}) = \{ 1, \ee^{-(x+y)}, \ldots ,
\ee^{-(x+y)} \} \subset \RR^{}_{+}$, we immediately know from Culver's
theorem \cite[Thm.~1]{Culver} that a real logarithm exists. In
particular, the principal matrix logarithm is well defined for these
matrices, which can be given in integral form \cite[Thm.~11.1]{Higham}
as
\[
    \log (\one + Q_{\bs{z}}) \, =  \int_{0}^{1} Q_{\bs{z}}
    \bigl( \one + \tau \ts Q_{\bs{z}} \bigr)^{-1} \dd \tau \ts .  
\]
The resulting matrix is again $Q_{\bs{z}/h(x+y)}$, as follows from our
previous calculation in conjunction with a standard analytic
continuation argument. Further, the claimed uniqueness follows from
the observation that $Q_{\bs{a}} = Q_{\bs{b}}$ in \eqref{eq:def-Q} is
only possible for $\bs{a}=\bs{b}$.
\end{proof}

The real logarithm of $\one + Q_{\bs{z}}$ in the last proof will not
be unique when $d>3$, as there are degeneracies in the spectrum, but
no other real logarithm can be of equal-input form, which follows from
\cite[Lemma~2.14]{BS2}.

This is a rare case of non-commuting matrices where one can give a
closed expression for the BCH formula.\footnote{We refer to the
  \textsc{WikipediA} entry on the BCH formula for a suitable summary
  with references.}  Indeed, via this route, one would find
\[
    \log \bigl( \ee^{Q_{\bs{x}}} \ee^{Q_{\bs{y}}}\bigr) \, = \,
    \Bigl( 1 - \myfrac{y}{2} + \myfrac{y (y-x)}{12} +
    \myfrac{xy^2}{24} + \ldots \Bigr) Q_{\bs{x}}  \ts + \ts
    \Bigl( 1+ \myfrac{x}{2} + \myfrac{x (x-y)}{12} -
    \myfrac{x^2 y}{24} + \ldots \Bigr) Q_{\bs{y}} 
\]
where the dots indicate higher order terms that would stem from
fourfold or higher commutators in the BCH expansion. The Taylor
expansion of the formula in Fact~\ref{fact:EI-BCH} to third order
agrees with this, as it must, because no other logarithm of
equal-input type can exist.

\section{Inhomogeneous flow}\label{sec:EI}

Consider the Cauchy problem defined by $\dot{M} = M Q$ with
$M(0) = \one$, where $Q = Q(t)$ with $t\geqslant 0$ is a (possibly
piecewise) continuous family of equal-input matrices from
$\cE^{}_{0}$.

\begin{lemma}\label{lem:PB-step}
  Assume that\/ $Q(t) \in \cE^{}_{0}$ for all\/ $t\geqslant 0$, and
  that\/ $Q(t)$ is piecewise continuous. Then, the Cauchy problem
\[
     \dot{M} \, = \, M Q   \quad \text{with}  \quad M(0) \, = \, \one
\]   
has a unique solution, where each\/ $M(t)$ with\/ $t\geqslant 0$ is a
real matrix with row sum\/ $1$ that has equal-input form.  Further, if
all\/ $Q(t)$ are rate matrices, $\{ M(t) : t\geqslant 0 \}$ is a flow
of equal-input Markov matrices.
\end{lemma}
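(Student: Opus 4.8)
The plan is to run the Peano--Baker series (PBS) for the Cauchy problem and to collapse every iterated integrand, via the product identity \eqref{eq:Q-product}, into a scalar multiple of a \emph{single} generator from $\cE^{}_{0}$; the structural claim and the stochastic claim then both follow by inspection.

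Since $\dot{M} = M Q(t)$ is linear and $t\mapsto Q(t)$ is piecewise continuous, existence and uniqueness of a solution on all of $[0,\infty)$ are granted by the Picard--Lindel\"{o}f theorem (in its Carath\'{e}odory form, integrating across the finitely many jumps on each compact interval); equivalently, the PBS
\[
   M(t) \, = \, \one + \sum_{n\geqslant 1} \, \int_{0\leqslant\tau_n\leqslant
   \cdots\leqslant\tau_1\leqslant t} Q(\tau_n)\ts Q(\tau_{n-1})\cdots Q(\tau_1)
   \dd\tau_n\cdots\dd\tau_1
\]
converges locally uniformly in $t$ and produces this unique solution. Writing $Q(\tau) = Q_{\bs{x}(\tau)}$ with summatory parameter $x(\tau) = \tr (C_{\bs{x}(\tau)})$, iterated use of \eqref{eq:Q-product} telescopes each ordered product,
\[
   Q(\tau_n)\ts Q(\tau_{n-1})\cdots Q(\tau_1) \, = \, (-1)^{n-1}
   \Bigl( \, \prod_{i=1}^{n-1} x(\tau_i) \Bigr) Q(\tau_n) \, \in \, \cE^{}_{0} \ts .
\]
As $\cE^{}_{0}$ is a $d$-dimensional, hence closed, linear subspace of $\Mat(d,\RR)$ that is stable under integration of continuous curves --- indeed $\int_a^b Q_{\bs{x}(\tau)} \dd\tau = Q_{\int_a^b \bs{x}(\tau)\dd\tau}$ --- each summand of the PBS lies in $\cE^{}_{0}$, hence so does the locally uniform limit $M(t) - \one$. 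Thus $M(t) = \one + Q_{\bs{c}(t)} = M_{\bs{c}(t)}$ for a unique parameter vector $\bs{c}(t)$, so every $M(t)$ is a real equal-input matrix with all row sums equal to $1$.

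For the flow statement, I would substitute the now-established form $M(t) = \one + Q_{\bs{c}(t)}$ back into $\dot{M} = MQ$. Since $\bigl( \one + Q_{\bs{c}}\bigr) Q_{\bs{x}} = Q_{\bs{x} - x\bs{c}}$ by \eqref{eq:Q-product} and $\bs{c}\mapsto Q_{\bs{c}}$ is linear, the Cauchy problem turns into the linear, non-autonomous ODE
\[
   \dot{\bs{c}} \, = \, \bs{x}(t) - x(t)\ts \bs{c} \ts , \qquad \bs{c}(0) = \bs{0} \ts ,
\]
and the integrating factor $E(t) = \exp\bigl( \int_0^t x(\tau)\dd\tau \bigr)$ gives $\bs{c}(t) = E(t)^{-1}\int_0^t E(\tau)\ts \bs{x}(\tau)\dd\tau$ together with $1-c(t) = E(t)^{-1}$. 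If every $Q(t)$ is a rate matrix, then $\bs{x}(\tau)\geqslant\bs{0}$, hence $E\geqslant 1$, so $\bs{c}(t)\geqslant\bs{0}$ and $c(t)\in[0,1)$; by the entrywise description of $M_{\bs{x}}$ from \eqref{eq:def-Markov-EI}, the one remaining constraint $1+c_i(t)\geqslant c(t)$ is then automatic, so each $M(t)$ is an equal-input Markov matrix and $\{M(t):t\geqslant 0\}$ is the claimed flow.

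The telescoping via \eqref{eq:Q-product}, the expansion of the right-hand side, and the integrating factor are routine; the only point deserving a word of care is the interchange of the closedness of $\cE^{}_{0}$ with the infinite PBS, which is covered by the standard locally uniform convergence bound for the Peano--Baker series, so I do not expect a genuine obstacle. (Alternatively, the PBS can be bypassed altogether: the identity $\bigl( \one + Q_{\bs{c}}\bigr) Q_{\bs{x}} = Q_{\bs{x} - x\bs{c}} \in \cE^{}_{0}$ shows that the vector field of the ODE is tangent to the affine subspace $\one + \cE^{}_{0}$ at each of its points, so $M(0) = \one \in \one + \cE^{}_{0}$ and uniqueness already force $M(t)\in\one+\cE^{}_{0}$ for all $t\geqslant 0$.)
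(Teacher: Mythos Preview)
Your proof is correct and follows essentially the same route as the paper: invoke the PBS, observe that every term past the identity lies in the subalgebra $\cE^{}_{0}$ (the paper phrases this via ``$\cE^{}_{0}$ is closed under multiplication and taking limits'', you make the telescope $Q(\tau_n)\cdots Q(\tau_1)=(-1)^{n-1}\prod_i x(\tau_i)\,Q(\tau_n)$ explicit via \eqref{eq:Q-product}), and conclude that the compactly converging sum stays in $\one+\cE^{}_{0}$. Your second half --- substituting $M=\one+Q_{\bs{c}}$ back into the ODE, reducing to $\dot{\bs{c}}=\bs{x}(t)-x(t)\ts\bs{c}$, and verifying the Markov constraints from the integrating-factor solution --- is exactly what the paper does \emph{after} the lemma, as its Proposition and Eq.~\eqref{eq:vec-sol}; so you have folded the next result into the lemma, which is fine and in fact gives a cleaner justification of the Markov claim than the paper's appeal to the semigroup property of equal-input Markov matrices. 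Your parenthetical tangency argument (the vector field on $\one+\cE^{}_{0}$ stays in $\cE^{}_{0}$) is a nice alternative the paper does not mention.
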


\begin{proof}
  Assume first that $Q(t)$ is continuous, but not necessarily of
  equal-input type. When $[Q(t), Q(s)]=\nix$ for all $t,s\geqslant 0$,
  we get the solution in closed form as
\begin{equation}\label{eq:closed-form}
     M(t) \, = \, M(0) \exp \Bigl( \, \int_{0}^{t} Q (\tau) \dd \tau \Bigr) 
      \, = \, \exp \Bigl( \, \int_{0}^{t} Q (\tau) \dd \tau \Bigr) 
\end{equation}
by classic ODE theory \cite{A,Walter}.  Within $\cE^{}_{0}$,
Eq.~\eqref{eq:def-Q} implies that the easy case of commuting matrices
only occurs when $Q(t) = \mu (t) \ts Q^{}_{0}$ with a fixed
$Q^{}_{0}$.  Then, the solution \eqref{eq:closed-form} simplifies to
$M(t) = \exp \bigl( u(t) \ts Q^{}_{0} \bigr)$ with
$u(t) = \int_{0}^{t} \mu (\tau) \dd \tau$, and the claim on the
equal-input nature of $M(t)$ is obvious from the fact that $Q^{2}_{0}$
is a scalar multiple of $Q^{}_{0}$.

Alternatively, one can see it from the convergent series expansion of
the matrix exponential in \eqref{eq:closed-form}. Indeed, when all
$Q(t)$ lie in $\cE^{}_{0}$, we have
$\int_{0}^{t} Q(\tau) \dd \tau \in \cE^{}_{0}$ for every
$t\geqslant 0$, and if all $Q(\tau)$ are rate matrices, then so is the
integral. Since the algebra $\cE^{}_0$ is closed under multiplication
and under taking limits, the equal-input structure is preserved and
then inherited by $M(t)$.

In the general case with non-commuting $Q(t)$, we can represent the
solution of the Cauchy problem by the PBS \cite{BSch}; see \cite{BS3}
for a formulation that is tailored to our present setting, and the
Appendix for a brief summary. Each summand of the PBS (except the
first, which is $\one$) lies in $\cE^{}_{0}$. As the series is
compactly converging, it then admits the same line of conclusions.

When $Q(t)$ is piecewise continuous, the locations of the potential
jumps are isolated, and one can solve the Cauchy problem for each
continuity stretch individually, then with the last value of $M(t)$ as
the new initial condition, modifying \eqref{eq:closed-form} or the PBS
in the obvious way. Harvesting the semigroup property of the (convex)
set of equal-input Markov matrices from \cite{BS2}, we can put the
pieces together and obtain the claimed properties for the entire flow.
\end{proof}

With this result, we can use the parametrising vectors to reduce the
problem to an ODE for vectors in $\RR^d$ as follows. Assume that we
have $Q(t) = Q_{\bs{q} (t)} = -q(t) \ts \one + C_{\bs{q} (t)}$ for
$t\geqslant 0$, with $\bs{q} (t)$ being piecewise continuous.  By
Lemma~\ref{lem:PB-step}, we know that the solution exists and must be
of the form
$M(t) = M_{\bs{x} (t)} = (1-x(t)) \ts \one + C_{\bs{x} (t)}$, for all
$t\geqslant 0$. Clearly, we then have
$\dot{M} (t) = - \dot{x} (t) \ts \one + C_{\dot{\bs{x}} (t)}$, which
is to be compared with
\[
   M(t) \ts Q(t) \, = \, \bigl( x(t) - 1 \bigr) q(t) \ts \one +
   C_{\bs{q} (t) - q (t) \bs{x} (t)} \ts ,
\]
as follows from a short calculation that uses
$C_{\bs{x}} C_{\bs{q}} =x \ts C_{\bs{q}}$. Since
$\one \not\in \cE^{}_{0}$, this leads to the simpler Cauchy problem
\[
    \dot{\bs{x}} + q \ts \bs{x} \, = \, \bs{q}
    \quad \text{with} \quad \bs{x} (0) = \bs{0} \ts .
\]
This inhomogeneous, linear first order ODE can be solved by the
standard variation of constants method \cite[Thm.~11.13 and
Rem.~11.14]{A}, which results in
\begin{equation}\label{eq:vec-sol}
  \bs{x} (t) \, = \, \exp \Bigl( - \! \int_{0}^{t} \!
  q(\rho) \dd \rho\Bigr) \int_{0}^{t} \! \bs{q} (\tau)
  \exp \Bigl( \, \int_{0}^{\tau} \! q(\sigma)
  \dd \sigma \Bigr) \dd \tau \ts .
\end{equation}
Clearly, this gives $\bs{x} (0) = \bs{0}$, and also the scalar
counterpart of the vector-valued ODE, namely $\dot{x} + q \ts x = q$
together with $x(0)=0$, which is also needed to satisfy the original
Cauchy problem.  We have thus established the following result.

\begin{prop}
   The Cauchy problem of Lemma~$\ref{lem:PB-step}$ has the solution\/
   $M(t) = M_{\bs{x} (t)}$ for\/ $t\geqslant 0$, with the\/ $\RR^d$-valued 
   vector function\/ $\bs{x}$ from \eqref{eq:vec-sol}.    \qed
\end{prop}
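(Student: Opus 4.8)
Since the bulk of the derivation already appears in the text between Lemma~\ref{lem:PB-step} and the statement, the proof reduces to a clean verification together with an appeal to uniqueness, and I would organise it in three steps.

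\textbf{Step 1: reduce the matrix Cauchy problem to a vector ODE.} The plan is to start from Lemma~\ref{lem:PB-step}, which already guarantees a unique solution and that each $M(t)$ is of equal-input form with row sum $1$, hence $M(t) = M_{\bs{x}(t)} = \one + Q_{\bs{x}(t)}$ for some curve $t\mapsto\bs{x}(t)$ in $\RR^d$. Identifying this curve is legitimate because, for $d\geqslant 2$, the parametrisation $\bs{x}\mapsto Q_{\bs{x}}$ (equivalently $\bs{x}\mapsto M_{\bs{x}}$) is injective: from $-x\ts\one + C_{\bs{x}} = -y\ts\one + C_{\bs{y}}$ one gets $(y-x)\ts\one = C_{\bs{x}-\bs{y}}$, which forces $x=y$ and then $\bs{x}=\bs{y}$, since for $d\geqslant 2$ the identity is not an equal-rows matrix; this is precisely the observation $\one\notin\cE^{}_{0}$ used in the text. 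Differentiating the ansatz and using linearity of $\bs{x}\mapsto Q_{\bs{x}}$ gives $\dot M(t) = Q_{\dot{\bs{x}}(t)}$, while $M(t)\ts Q(t) = (\one+Q_{\bs{x}(t)})\ts Q_{\bs{q}(t)} = Q_{\bs{q}(t)} - q(t)\ts Q_{\bs{x}(t)} = Q_{\bs{q}(t)-q(t)\bs{x}(t)}$ by \eqref{eq:Q-product}. Matching the two expressions via the injectivity just noted converts $\dot M = MQ$ into the linear, inhomogeneous, first-order ODE $\dot{\bs{x}} + q\ts\bs{x} = \bs{q}$ with $\bs{x}(0)=\bs{0}$, exactly as recorded before the statement.

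\textbf{Step 2: solve the vector ODE and check the scalar constraint.} The vector ODE is handled by the classical variation-of-constants method (\cite[Thm.~11.13 and Rem.~11.14]{A}) with integrating factor $\exp\bigl(\int_0^t q(\rho)\dd\rho\bigr)$, which yields precisely formula~\eqref{eq:vec-sol}. I would then verify directly that the right-hand side of \eqref{eq:vec-sol} is continuous on $[0,\infty)$, vanishes at $t=0$, and is $C^1$ with the prescribed derivative on each subinterval on which $\bs{q}$ is continuous, that being the appropriate notion of solution in the piecewise-continuous regime. The representation $M(t)=M_{\bs{x}(t)}$ additionally requires the scalar identity $\dot x + q\ts x = q$ with $x(0)=0$, where $x(t)=x_1(t)+\dots+x_d(t)$; but this costs nothing extra, as it is obtained by summing the $d$ component equations of the vector ODE and using $q(t)=q_1(t)+\dots+q_d(t)$.

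\textbf{Step 3: conclude.} Steps~1 and~2 together show that $M_{\bs{x}(t)}$ with $\bs{x}$ from \eqref{eq:vec-sol} solves the Cauchy problem, and the uniqueness part of Lemma~\ref{lem:PB-step} --- itself coming from Picard--Lindel\"{o}f applied on each continuity stretch --- then identifies it as the solution. I do not expect a real obstacle: the only point needing a word of care is the passage from the matrix equation to the vector equation, which rests on the elementary injectivity of $\bs{x}\mapsto Q_{\bs{x}}$ on $\cE^{}_{0}$ for $d\geqslant 2$, while everything else is the variation-of-constants computation and the bookkeeping already carried out in the surrounding text.
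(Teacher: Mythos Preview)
Your proposal is correct and follows essentially the same route as the paper: invoke Lemma~\ref{lem:PB-step} for existence, uniqueness, and the equal-input form of $M(t)$, then use the injectivity of $\bs{x}\mapsto Q_{\bs{x}}$ (equivalently, $\one\notin\cE^{}_{0}$) to reduce $\dot M=MQ$ to the linear ODE $\dot{\bs{x}}+q\ts\bs{x}=\bs{q}$, solved by variation of constants. The only cosmetic difference is that you work with the identity $M_{\bs{x}}=\one+Q_{\bs{x}}$ and the product rule \eqref{eq:Q-product}, whereas the paper splits $M_{\bs{x}}=(1-x)\ts\one+C_{\bs{x}}$ and uses $C_{\bs{x}}C_{\bs{q}}=x\ts C_{\bs{q}}$; the resulting vector ODE and its solution \eqref{eq:vec-sol} are identical.
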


Here, we have $\det ( M(t)) = (1 - x(t))^{d-1}$, where $x(0)=0$
matches $\det(M(0)) = \det(\one)=1$. Since
$\frac{\dd}{\dd t} \det (M(t)) = \det (M(t)) \ts \tr (Q(t))$ by
Liouville's theorem \cite[Prop.~11.4]{A}, we get
\begin{equation}\label{eq:Liouville-1}
  \det (M(t)) \, = \, \exp \Bigl( \, \int_{0}^{t} \! \tr (Q(\tau))
  \dd \tau \Bigr),
\end{equation}
which never vanishes. But this implies that $x(t)$, which starts at
$0$, can never take the value $1$, as this would make $M(t)$
singular. With $x(0)=0$, this implies $x(t) < 1$ for all
$t\geqslant 0$. Note that all eigenvalues of $M(t)$ are positive real
numbers in this case.

\begin{remark}\label{rem:c-values}
  Liouville's theorem actually implies that $\det (M(t))$ is either
  identically $0$ or never vanishes \cite[Cor.~11.5]{A}. Indeed, when
  we admit more general initial conditions, the formula from
  \eqref{eq:Liouville-1} is simply replaced by
\[
     \det (M(t)) \, = \,  \det (M(0))
     \exp \Bigl( \, \int_{0}^{t} \! \tr (Q(\tau)) \dd \tau \Bigr) ,
\] 
where $Q(t) = -q(t) \one + C_{\bs{q} (t)}$ with
$\tr (C_{\bs{q}(t)}) = q(t)$, hence $\tr(Q(t)) = - (d-1) \ts q(t)$.
In particular, the exponential factor is strictly positive for all
$t\geqslant 0$.  Recall that $\det(M(0)) =
(1-x(0))^{d-1}$. Consequently, the three possible cases now are
$x(0) < 1$ (which means $\det(M(t))>0$ for all $t\geqslant 0$ and
includes the case treated above), $x(0)=1$ (forcing
$\det(M(t)) \equiv 0$), and $x(0)>1$. The last case either implies
$\det(M(t))>0$ or $\det(M(t))<0$ for all $t\geqslant 0$, depending on
whether $d$ is odd or even, respectively. This distinction naturally
occurs in the treatment of equal-input matrices \cite{BS1}. If the
determinant is negative, it must stay so, which is consistent with the
grading of the monoid of equal-input Markov matrices by the sign of
$1-x$, and thus by the determinant for even $d$, as detailed in
\cite[Prop.~2.6]{BS2}.  \exend
\end{remark}

When all $Q(t) = Q_{\bs{q}(t)}$ are rate matrices, which is equivalent
with $\bs{q} (t) \geqslant \bs{0}$, we also know that
$\bs{x} (t) \geqslant \bs{0}$ together with $x(0)=0$ from
\eqref{eq:vec-sol}, and then get the stronger inequality
$\lvert x (t) \rvert < 1$ for all $t\geqslant 0$.  Consequently, the
spectral radius of $A (t) = M(t) - \one$ is always less than $1$, and
\[
    R(t) \, \defeq \, \log \bigl( \one + A(t) \bigr) \, =
    \sum_{n=1}^{\infty} \myfrac{(-1)^{n-1}}{n} A(t)^n
\]
is a convergent series. Observing that $A(t)^n = (- x(t))^{n-1} A(t)$
holds for all $n\in\NN$, one finds
\begin{equation}\label{eq:real-log}
  R(t) \, = \, \frac{\log \bigl( 1- x(t)\bigr)}{- x(t)} A(t) \, = \,
  \frac{-\log \bigl( 1 - x(t) \bigr)}{x(t)}
  \ts\ts Q_{\bs{x} (t)}
\end{equation}
with $R(0)=\nix$ and $R(t) = \nix$ whenever $x(t)=0$, via an argument
of de l'Hospital type, because the latter case is only possible for
$\bs{x} (t) = \bs{0}$ under our assumption that $Q$ is a family of
rate matrices. Further, $R(t)$ is a Markov generator if
$\bs{x}(t) \geqslant \bs{0}$, as $Q_{\bs{x} (t)}$ then is a rate
matrix and the prefactor is non-negative. Here, $R(t)$ is a real
logarithm of the solution function, hence $M(t) = \ee^{R(t)}$, and it
is actually the \emph{principal logarithm}; see \cite{Gant,Higham,HJ} for
background.

More generally, the principal matrix logarithm exists for all (real or
complex) matrices with positive spectrum, and is defined as the unique
logarithm whose eigenvalues all lie in the strip
$\{ z \in \CC: -\pi < \imag(z) < \pi \}$. One general formula follows
from \cite[Thm.~11.1]{Higham} and reads
\[
    \log \bigl(M(t) \bigr) \, = \int_{0}^{1} (M(t) - \one)
       \bigl( \tau (M(t) - \one) + \one \bigr)^{-1} \dd \tau \ts ,
\]
which is applicable for all of the above cases with $x(t)<1$. It is a
consequence of analytic continuation that the formula from
\eqref{eq:real-log} is the correct one also in this more general case.

Since $M(t)$ is of equal-input form, its spectrum will generally be
degenerate (certainly for $d\geqslant 3$). In view of
Fact~\ref{fact:nilpotent}, $M(t)$ can never be cyclic for
$d\geqslant 3$ (meaning that characteristic and minimal polynomial
cannot agree), though for $d=3$ with $x=0$, we can have (non-Markov)
equal-input matrices with Jordan normal form
$1 \oplus \left( \begin{smallmatrix} 1 & 1 \\ 0 & 1 \end{smallmatrix}
\right)$, in which case the real logarithm is still unique. In
general, however, there will be other real logarithms, compare
\cite{Culver}, but none of equal-input form.  Putting the pieces
together, the derived result reads as follows; see \cite{BS2} for
previous results on the embedding problem.

\begin{theorem}
  The Cauchy problem of Lemma~$\ref{lem:PB-step}$ defines a forward
  flow\/ $\{ M(t) : t \geqslant 0 \}$ of equal-input matrices with
  unit row sums, where each\/ $M(t)$ possesses a real logarithm. In
  particular, we have\/ $M(t) = \ee^{R(t)}$ with the matrix function\/
  $R$ from \eqref{eq:real-log} and \eqref{eq:vec-sol}, where $R(t)$ is
  the unique real logarithm of\/ $M(t)$ of equal-input form.
   
  Further, when all\/ $Q(t)$ are Markov generators $(\nts$of
  equal-input type$\ts )$, so\/ $Q(t) = Q_{\bs{q}(t)}$ with\/
  $\bs{q} (t) \geqslant \bs{0}$ for all\/ $t\geqslant 0$, the forward
  flow consists of Markov matrices only. Then, every $R(t)$ is a
  Markov generator of equal-input type as well, and each individual\/
  $M(t)$ is also embeddable in a time-homogeneous Markov semigroup
  that is generated by an equal-input Markov generator.  \qed
\end{theorem}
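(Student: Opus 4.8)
\emph{Proof idea.} The plan is to assemble the statement from the material developed just before it, leaving essentially one analytic point to be pinned down. For the first assertion I would begin by invoking Lemma~\ref{lem:PB-step}, which already yields existence and uniqueness of the solution, the equal-input form of every $M(t)$, and the unit row sums (and, when all $Q(t)$ are rate matrices, that $\{M(t):t\geqslant 0\}$ is a flow of equal-input Markov matrices). The new content of the first claim is then only that each $M(t)$ admits a real logarithm of the stated shape. For this I would use that, by Liouville's theorem in the form \eqref{eq:Liouville-1}, $\det(M(t)) = \exp\bigl(\int_{0}^{t} \tr(Q(\tau))\dd\tau\bigr)$ is strictly positive; since $M(t)$ is equal-input with summatory parameter $x(t)$ and $x(0)=0$, continuity forces $x(t)<1$ for all $t\geqslant 0$, so $\sigma(M(t)) = \{1,1-x(t),\ldots,1-x(t)\}\subset\RR^{}_{+}$. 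Culver's theorem \cite[Thm.~1]{Culver} then guarantees a real logarithm, and the principal one is well defined.

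The next step is to identify that logarithm explicitly as the matrix in \eqref{eq:real-log}. When $\lvert x(t)\rvert<1$, the power series for $\log(\one+A(t))$ with $A(t)=M(t)-\one$ converges and, using $A(t)^n = (-x(t))^{n-1}A(t)$, collapses to \eqref{eq:real-log}; the value at $x(t)=0$ is the de l'Hospital limit of the same closed form (in the rate-matrix case this forces $\bs{x}(t)=\bs{0}$, but in general it corresponds to a genuinely unipotent $M(t)$). For the remaining range $x(t)<1$ with $\lvert x(t)\rvert\geqslant 1$, where the series diverges, I would invoke the integral formula $\log(M(t)) = \int_{0}^{1} (M(t)-\one)\bigl(\tau(M(t)-\one)+\one\bigr)^{-1}\dd\tau$ from \cite[Thm.~11.1]{Higham} together with analytic continuation in the single scalar $x(t)$: both sides depend analytically on $x(t)$ on the slit plane $\CC\setminus(-\infty,0]$ and agree for $\lvert x(t)\rvert<1$, hence throughout the admissible range. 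Thus \eqref{eq:real-log} is the principal logarithm in all cases, $R(t) = \frac{-\log(1-x(t))}{x(t)}\ts Q_{\bs{x}(t)}$ is of equal-input type by the scaling relation $\lambda Q_{\bs{x}} = Q_{\lambda\bs{x}}$, and its uniqueness among equal-input real logarithms follows from \cite[Lemma~2.14]{BS2}.

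For the second assertion I would note that $\bs{q}(t)\geqslant\bs{0}$ makes the integrand in \eqref{eq:vec-sol} nonnegative, so $\bs{x}(t)\geqslant\bs{0}$ and hence $0\leqslant x(t)<1$; together with Lemma~\ref{lem:PB-step} this already gives that the flow consists of equal-input Markov matrices. The scalar prefactor $\frac{-\log(1-x(t))}{x(t)}$ in \eqref{eq:real-log} is nonnegative (equal to $1$ at $x(t)=0$) and $Q_{\bs{x}(t)}$ is an equal-input rate matrix since $\bs{x}(t)\geqslant\bs{0}$, so $R(t)$, being a nonnegative multiple of an equal-input Markov generator, is itself one. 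Embeddability is then immediate: for fixed $t$, $\{\ee^{sR(t)}:s\geqslant 0\}$ is, by \eqref{eq:expo-1}, a time-homogeneous Markov semigroup of equal-input type generated by the equal-input Markov generator $R(t)$, and $M(t)=\ee^{R(t)}$ lies in it.

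In truth there is no serious obstacle here: the theorem mainly collects, in one place, conclusions already drawn in the run-up to it. The one point that genuinely calls for a sentence rather than a citation is the analytic-continuation step, that is, making precise that the closed form \eqref{eq:real-log}, obtained from a series valid only for $\lvert x(t)\rvert<1$, still represents the principal matrix logarithm on the full range $x(t)<1$; everything else reduces to Lemma~\ref{lem:PB-step} or to short computations with the scaling, nilpotency and convexity properties of $\cE^{}_{0}$ recorded in Section~\ref{sec:prelim}.
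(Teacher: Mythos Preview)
Your proposal is correct and follows essentially the same route as the paper: the theorem is stated with a \qed because all of its content has already been derived in the text preceding it, and you have accurately identified and assembled those pieces (Lemma~\ref{lem:PB-step} for the flow structure, Liouville for $x(t)<1$, the collapsed logarithm series for \eqref{eq:real-log}, the Higham integral plus analytic continuation for the full range, \cite[Lemma~2.14]{BS2} for uniqueness, and the sign analysis of the prefactor for the Markov-generator claim). Your singling out of the analytic-continuation step as the one point requiring a genuine sentence matches the paper's own emphasis exactly.
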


Let us look at two particular limiting cases that we will need again
later for a comparison of our two different example classes at their
intersection. First, if we assume $Q(t) \in \cE^{\prime}_{0}$ for all
$t\geqslant 0$, we have $Q(t) = Q_{\bs{q}(t)} = C_{\bs{q} (t)}$ with
$q(t) \equiv 0$. Then, \eqref{eq:vec-sol} simplifies to
$\bs{x} (t) = \int_{0}^{t} \bs{q} (\tau) \dd \tau$ with
$x(t) \equiv 0$, and \eqref{eq:real-log} becomes
\begin{equation}\label{eq:trivial-case}
   R(t) \, = \, Q_{\bs{x} (t)} \, = \, C_{\bs{x} (t)}
\end{equation}
after an application of de l'Hospital's rule. This is one special
limiting case.

Next, and more generally, let us write $R(t)$ from \eqref{eq:real-log}
in a different way. Each $Q\in \cE^{}_{0}$ can uniquely be written as
a sum of a constant-input matrix (compare \eqref{eq:def-Q} and the
paragraph before it) and one from the ideal $\cE^{\prime}_{0}$, namely
as $Q = Q_{\bs{q}} = q J_{d} + C_{\bs{r}}$ with
$J_{d} = \frac{1}{d} C_{\bs{1}}$ and
$\bs{r} = \bs{q} - \frac{q}{d} \bs{1}$.  Setting $Q^{}_{0} = J_{d}$,
we now write
\begin{equation}\label{eq:decomp}
     Q(t) \, = \, Q_{\bs{q}(t)} \, = \, \mu (t) \ts Q^{}_{0} + C_{\bs{r} (t)}
\end{equation}
with the scalar function $\mu (t) = q(t)$ and
$\bs{r} (t) = \bs{q} (t) - \frac{q(t)}{d} \bs{1}$. In particular, one
then has $r(t) \equiv 0$. Setting
$u (t) = \int_{0}^{t} \mu (\tau) \dd \tau$, so that $u(0)=0$,
Eq.~\eqref{eq:vec-sol} simplifies to
\[
  \bs{x} (t) \, = \, \ee^{- u(t)} \int_{0}^{t} \ee^{u(\tau)} \bs{q}
  (\tau) \dd \tau .
\]
Its scalar counterpart then is $x(t) = 1 - \ee^{- u(t)}$, where we
used that
$\int_{0}^{t} \ee^{u (\tau)} \mu (\tau) \dd \tau = \ee^{ u(t)} - 1$.
With $f(x) = \frac{x}{\ee^x - 1}$, which will appear many times from
now on, Eq.~\eqref{eq:real-log} turns into
\begin{equation}\label{eq:weighted-integral}
  R (t) \, = \, f (u(t)) \ts \ee^{u(t)} Q_{\bs{x} (t)}
  \, = \, f(u(t)) \int_{0}^{t}
  \ee^{u(\tau)} Q_{\bs{q} (\tau)} \dd \tau \, = \, f(u(t)) \int_{0}^{t}
  \ee^{u(\tau)} Q(\tau) \dd \tau .
\end{equation}
This gives $R (t)$ as a weighted integral over the original generator
family from \eqref{eq:decomp}, which thus is a direct generalisation of
Fact~\ref{fact:EI-BCH}.

\section{Two generalisations and their exact
  solutions}\label{sec:general}

The decomposition of Eq.~\eqref{eq:decomp} suggests that a little more
might be possible than such a sum with $Q^{}_{0}$ a constant-input
matrix. Indeed, we will establish that the algebraic structure of
equal-input matrices from Fact~\ref{fact:ideal} is strong enough to
allow for a significant extension. So, let $Q^{}_{0}$ now be
\emph{any} fixed rate matrix, and consider the matrix family defined
by
\begin{equation}\label{eq:pert-gen}
    Q(t) \, = \, \mu(t) \ts Q^{}_{0} + C_{\bs{q} (t)} 
\end{equation}
subject to the assumption that $q(t) = \tr (C_{\bs{q} (t)}) = 0 $ for
all $t\geqslant 0$, so $C_{\bs{q} (t)} \in \cE^{\prime}_{0}$.  Here,
$\mu$ is a strictly positive scalar function, assumed continuous,
which has the interpretation of a time-dependent global rate change
for $Q^{}_{0}$, and $C_{\bs{q}(t)}$ is a time-dependent modification
in the form of a traceless equal-input matrix, also assuming that
$\bs{q}$ is continuous.\footnote{The continuity assumptions can later
  be generalised to local integrability of $\mu$ and $\bs{q}$, if one
  replaces the ODE for the flow with the corresponding Volterra
  integral equation. Also, the strict positivity of $\mu$ can be
  slightly relaxed, but we suppress further details on this aspect.}
Here, in line with Fact~\ref{fact:ideal}, we have
\begin{equation}\label{eq:easy}
   Q^{}_{0} C_{\bs{q} (t)} \, = \, \nix \, , \quad
   C_{\bs{q} (t)} C_{\bs{q}(t')} \,  = \, q(t) C_{\bs{q} (t')}
   \, = \, \nix \quad \text{and} \quad
   C_{\bs{q} (t)} \ts Q^{}_{0} \in \cE^{\prime}_{0}
\end{equation}
for all $t,t'\geqslant 0$ under our assumptions. This implies that we
can compute the summands $I_{n} (t)$ of the PBS for the solution of
the Cauchy problem $\dot{M} = M \ts Q$ with $M(0)=\one$ more
explicitly.

Let us begin with the following observation.

\begin{fact}\label{fact:integral}
  Let\/ $\mu$ be a locally integrable function and define\/
  $u(t) = \int_{0}^{t} \mu(\tau) \dd \tau$. Then,
\[
    \int_{0}^{t} u(\tau)^{n} \mu(\tau) \dd \tau \, = \,
    \myfrac{u(t)^{n+1}}{n+1}
\]  
  holds for all\/ $n\in\NN$, and one also obtains the iterated integral
\[
    \int_{0}^{t} \int_{0}^{t_1} \cdots \int_{0}^{t_{n-1}}
    \mu(t^{}_{1}) \ts \mu(t^{}_{2}) \cdots \mu(t^{}_{n}) 
    \dd t^{}_{n} \cdots \dd t^{}_{2} \dd t^{}_{1} \, = \, 
    \myfrac{u(t)^{n}}{n \ts !} \ts .
\]  
\end{fact}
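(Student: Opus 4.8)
The plan is to prove both identities by elementary calculus, the first one directly and the second one by induction using the first. For the first identity, I would observe that $u$ is an antiderivative of $\mu$ (at least in the sense of absolute continuity, which is what local integrability of $\mu$ gives us via $u' = \mu$ almost everywhere). Hence $u(\tau)^n \mu(\tau) = \frac{1}{n+1}\frac{\dd}{\dd\tau} u(\tau)^{n+1}$, and integrating from $0$ to $t$ gives $\frac{1}{n+1}\bigl(u(t)^{n+1} - u(0)^{n+1}\bigr) = \frac{u(t)^{n+1}}{n+1}$, since $u(0) = 0$. The only mild technical point is justifying the chain rule / fundamental theorem of calculus for the composition $\tau \mapsto u(\tau)^{n+1}$ when $u$ is merely absolutely continuous; this is standard, since $v \mapsto v^{n+1}$ is Lipschitz on the bounded range of $u$ over $[0,t]$, so the composition is again absolutely continuous with the expected derivative a.e.

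For the iterated integral, I would argue by induction on $n$. Write $J_n(t)$ for the left-hand side. The base case $n=1$ is immediate: $J_1(t) = \int_0^t \mu(t_1)\dd t_1 = u(t)$. For the inductive step, I would peel off the innermost integrals: the inner $(n-1)$-fold integral, as a function of its upper limit $t_1$, is exactly $J_{n-1}(t_1) = \frac{u(t_1)^{n-1}}{(n-1)!}$ by the induction hypothesis. Therefore
\[
   J_n(t) \, = \, \int_0^t \frac{u(t_1)^{n-1}}{(n-1)!}\ts \mu(t_1)\dd t_1
   \, = \, \frac{1}{(n-1)!}\cdot\frac{u(t)^{n}}{n}
   \, = \, \frac{u(t)^n}{n\ts!}\ts ,
\]
where the middle equality is precisely the first identity with exponent $n-1$ in place of $n$. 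This closes the induction.

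There is really no serious obstacle here; the statement is a routine lemma whose role is purely computational, feeding into the evaluation of the Peano--Baker summands $I_n(t)$. The only place requiring a moment's care is the regularity bookkeeping under the weakened hypothesis that $\mu$ is merely locally integrable rather than continuous, so that $u$ is absolutely continuous rather than $C^1$; once one notes that composition with the smooth (locally Lipschitz) map $v \mapsto v^{n+1}$ preserves absolute continuity and respects the chain rule almost everywhere, everything goes through verbatim. If one prefers to avoid even this, the same proof works under the paper's standing continuity assumption on $\mu$ using the classical fundamental theorem of calculus, with no change to the displayed computations.
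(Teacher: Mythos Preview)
Your proof is correct and follows essentially the same route as the paper: both arguments reduce the first identity to recognising that $u(\tau)^{n}\mu(\tau)$ is, up to the factor $\tfrac{1}{n+1}$, the derivative of $u(\tau)^{n+1}$ (you phrase this via the chain rule and the fundamental theorem of calculus, the paper via integration by parts, but the underlying computation is identical), and both prove the iterated-integral formula by the same induction, peeling off the outer integral and invoking the first identity.
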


\begin{proof}
  For the first claim, since $u(0)=0$, we employ integration by parts
  to obtain
\[
   \int_{0}^{t} u(\tau)^{n} \mu(\tau) \dd \tau \, = \, u(t)^{n+1}
   - \ts n \int_{0}^{t} u(\tau)^n \mu(\tau) \dd \tau
\]
and solve for the integral, where the integrability of $u^n \mu$ is
clear by standard arguments.

The second claim is true for $n=1$, by the very definition of
$u(t)$. Then, for $n\in\NN$, we get
\[
\begin{split}
    \int_{0}^{t} \int_{0}^{t_1} & \cdots \int_{0}^{t_{n}}
    \mu(t^{}_{1}) \ts \mu(t^{}_{2}) \cdots \mu(t^{}_{n+1}) 
    \dd t^{}_{n+1} \cdots \dd t^{}_{2} \dd t^{}_{1}  \\[2mm]
    & = \int_{0}^{t}\mu(t^{}_{1})\ts \myfrac{u(t^{}_{1})^{n}}{n\ts !}
    \dd t^{}_{1} \, = \, \myfrac{u(t)^{n+1}}{(n+1) !}  \ts ,
\end{split}    
\]
which uses the first claim, and settles the second inductively.
\end{proof}

For the PBS, we have
$I^{}_{1} (t) = \int_{0}^{t} Q(\tau) \dd \tau = u(t) Q^{}_{0} +
C_{\bs{q}^{(1)} (t)}$ with
$\bs{q}^{(1)} (t) = \int_{0}^{t} \bs{q} (\tau) \dd \tau$. Then,
defining
$\bs{q}^{(n+1)} (t) = \int_{0}^{t} \mu(\tau) \ts\ts \bs{q}^{(n)}
(\tau) \dd \tau$ for $n\in\NN$, one inductively finds
\begin{equation}\label{eq:PBS-iter}
  I^{}_{n+1} (t) \, = \, \int_{0}^{t} I^{}_{n} (\tau) \ts Q(\tau) \dd \tau
  \, = \, \myfrac{u(t)^{n+1}}{(n+1)!} \ts\ts Q^{n+1}_{0} +
  C_{\bs{q}^{(n+1)} (t)} \, Q^{n}_{0}
\end{equation}
by an application of Fact~\ref{fact:integral} and the observation that
all other terms vanish as a result of Eq.~\eqref{eq:easy}. Note that
the last term in \eqref{eq:PBS-iter}, for any $t\geqslant 0$, is an 
element of $\cE^{\prime}_{0}$ due to Fact~\ref{fact:ideal}. Putting 
all this together, we get the following result.

\begin{prop}\label{prop:Cauchy-pert}
  Consider the Cauchy problem\/ $\dot{M} = M \ts Q$ with\/ $M(0)=\one$
  for the continuous matrix family defined by Eq.~\eqref{eq:pert-gen}.
  Then, the PBS for its solution has the additive form\/
  $M(t) = \one + A(t) = \one + A^{}_{0}(t) + A_{\tri} (t)$, with\/
  $Q^{0}_{0} = \one$ and
\[
  A^{}_{0} (t) \, = \, \ee^{ u(t) \ts Q^{}_{0}} - \one \qquad
  \text{and} \qquad A_{\tri} (t) \, = \sum_{n=1}^{\infty}
  C_{\bs{q}^{(n)} (t)} \, Q^{n-1}_{0} \ts ,
\]   
where\/ $A^{}_{0} (t) \in \cA^{}_{\ts 0}$ and\/
$A_{\tri} (t) \in \cE^{\prime}_{0}$ for all\/ $t\geqslant 0$. The
infinite sum is compactly converging, and the spectral radius of
$A(t)$, for all sufficiently small\/ $t$, satisfies\/
$\varrho^{}_{\nts A(t)} < 1$.
\end{prop}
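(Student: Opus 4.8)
The plan is to assemble the claimed decomposition directly from the Peano--Baker series, using the three algebraic identities in Eq.~\eqref{eq:easy} to decouple the two pieces. First I would recall the PBS representation $M(t) = \one + \sum_{n\geqslant 1} I^{}_{n}(t)$ with $I^{}_{1}(t) = \int_{0}^{t} Q(\tau)\dd\tau$ and $I^{}_{n+1}(t) = \int_{0}^{t} I^{}_{n}(\tau)\, Q(\tau)\dd\tau$, valid for all $t\geqslant 0$ because the generator family is continuous (so the Picard--Lindel\"{o}f hypotheses hold and the series converges compactly on compacta; cf.\ the Appendix and \cite{BSch,BS3}). Then I would establish Eq.~\eqref{eq:PBS-iter} by induction on $n$: the base case is the displayed formula for $I^{}_{1}$, and for the inductive step one substitutes $I^{}_{n}(\tau) = \frac{u(\tau)^{n}}{n!}\, Q^{n}_{0} + C_{\bs{q}^{(n)}(\tau)}\, Q^{n-1}_{0}$ and $Q(\tau) = \mu(\tau) Q^{}_{0} + C_{\bs{q}(\tau)}$ into $\int_{0}^{t} I^{}_{n}(\tau)\, Q(\tau)\dd\tau$, obtaining four terms. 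The cross term $\frac{u(\tau)^{n}}{n!} Q^{n}_{0}\, C_{\bs{q}(\tau)}$ vanishes by $Q^{}_{0} C_{\bs{q}(\tau)} = \nix$, and the term $C_{\bs{q}^{(n)}(\tau)} Q^{n-1}_{0} C_{\bs{q}(\tau)}$ vanishes too: for $n\geqslant 2$ it contains the factor $Q^{}_{0} C_{\bs{q}(\tau)} = \nix$, and for $n=1$ it is $C_{\bs{q}^{(1)}(\tau)} C_{\bs{q}(\tau)} = q^{(1)}(\tau)\, C_{\bs{q}(\tau)} = \nix$ since $\tr(C_{\bs{q}^{(1)}(\tau)}) = \int_{0}^{t}\tr(C_{\bs{q}(\sigma)})\dd\sigma = 0$. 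The surviving two terms are $\frac{1}{n!}\bigl(\int_{0}^{t} u(\tau)^{n}\mu(\tau)\dd\tau\bigr) Q^{n+1}_{0} = \frac{u(t)^{n+1}}{(n+1)!} Q^{n+1}_{0}$ by Fact~\ref{fact:integral}, and $\bigl(\int_{0}^{t}\mu(\tau) C_{\bs{q}^{(n)}(\tau)}\dd\tau\bigr) Q^{n}_{0} = C_{\bs{q}^{(n+1)}(t)} Q^{n}_{0}$ by definition of $\bs{q}^{(n+1)}$, which completes the induction.

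Next I would split the sum $\sum_{n\geqslant 1} I^{}_{n}(t)$ into the two announced blocks. Summing the first terms of \eqref{eq:PBS-iter} over $n\geqslant 1$ (including the $Q^{}_{0}$ contribution from $I^{}_{1}$) gives $\sum_{n\geqslant 1}\frac{u(t)^{n}}{n!} Q^{n}_{0} = \ee^{u(t) Q^{}_{0}} - \one = A^{}_{0}(t)$, which lies in $\cA^{}_{0}$ because $Q^{}_{0}$ is a rate matrix, hence in $\cA^{}_{0}$, and $\cA^{}_{0}$ is a closed algebra. Summing the second terms gives $\sum_{n\geqslant 1} C_{\bs{q}^{(n)}(t)} Q^{n-1}_{0} = A_{\tri}(t)$; each summand lies in $\cE^{\prime}_{0}$ by Fact~\ref{fact:ideal} (left multiplication by $C_{\bs{q}^{(n)}(t)} \in \cE^{\prime}_{0}$, a two-sided ideal in $\cA^{}_{0}$, keeps us in $\cE^{\prime}_{0}$), and $\cE^{\prime}_{0}$ is a closed subspace, so the limit $A_{\tri}(t)$ is in $\cE^{\prime}_{0}$ as well. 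Thus $M(t) = \one + A^{}_{0}(t) + A_{\tri}(t)$ with the stated membership, and setting $A(t) = A^{}_{0}(t) + A_{\tri}(t)$ gives the additive form.

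For the convergence and spectral-radius claims, I would estimate crudely. On a fixed compact interval $[0,T]$ put $K = \sup_{[0,T]}\lVert Q(\tau)\rVert$ (finite by continuity); the standard PBS bound gives $\lVert I^{}_{n}(t)\rVert \leqslant (Kt)^{n}/n!$, so $\sum_{n\geqslant 1}\lVert I^{}_{n}(t)\rVert \leqslant \ee^{Kt} - 1$, uniformly on $[0,T]$, which yields compact convergence of the whole series and, a fortiori, of its sub-series $A_{\tri}(t)$. In particular $\lVert A(t)\rVert \leqslant \ee^{Kt} - 1 \to 0$ as $t\to 0^{+}$, so $\varrho^{}_{\nts A(t)} \leqslant \lVert A(t)\rVert < 1$ for all sufficiently small $t$. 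I do not expect a genuine obstacle here; the only place requiring care is the bookkeeping in the inductive step, specifically making sure the $n=1$ cross term $C_{\bs{q}^{(1)}(\tau)} C_{\bs{q}(\tau)}$ is correctly seen to vanish via the vanishing trace of $\bs{q}^{(1)}$ (which in turn uses $q(\tau)\equiv 0$), since this is the one spot where the hypothesis $\tr(C_{\bs{q}(\tau)}) = 0$ is essential rather than cosmetic. Everything else is a direct consequence of the ideal structure recorded in Fact~\ref{fact:ideal} and the integral identities of Fact~\ref{fact:integral}.
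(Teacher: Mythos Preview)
Your proposal is correct and follows essentially the same approach as the paper: derive Eq.~\eqref{eq:PBS-iter} by induction using Fact~\ref{fact:integral} and the vanishing relations in Eq.~\eqref{eq:easy}, then sum the two columns to obtain $A^{}_{0}(t)$ and $A_{\tri}(t)$, and finish with continuity at $t=0$ for the spectral-radius claim. Your write-up is in fact more detailed than the paper's own proof (which leaves the inductive step and the spectral-radius bound as one-line remarks); the only cosmetic slip is the upper limit in $\tr(C_{\bs{q}^{(1)}(\tau)}) = \int_{0}^{\tau}\tr(C_{\bs{q}(\sigma)})\dd\sigma$, and you could have disposed of the $n=1$ cross term uniformly via Fact~\ref{fact:ideal} rather than computing the trace directly.
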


\begin{proof}
  One has $M(t) = \one + \sum_{n\geqslant 1} \ts I_n (t)$ from the
  PBS, which is compactly converging by
  \cite[Thm.~1]{BSch}. Evaluating the sum with the terms from
  \eqref{eq:PBS-iter} gives the decomposition into contributions from
  $\cA^{}_{\ts 0}$ and $\cE^{\prime}_{0}$ as stated.
  
  Since $A(0)=\nix$, the claim on the spectral radius is clear by
  continuity.
\end{proof}

Since $u(t) = \cO (t)$ as $t \, {\scriptstyle \searrow} \, 0$, the
result on the spectral radius implies that $M(t)$, at least for all
sufficiently small $t$, possesses a real logarithm in the form of the
convergent series
\[
  R(t) \, = \, \log \bigl( M(t) \bigr) \, = \, \log \bigl( \one + A(t)
  \bigr) \, = \sum_{n=1}^{\infty} \myfrac{(-1)^{n-1}}{n} A(t)^{n} \ts ,
\]  
which defines the principal matrix logarithm. Consequently, we know
that $M(t) = \exp ( R(t) )$, at least for small $t$. Since
$A^{}_{0} (t) \in \cA^{}_{\ts 0}$ and
$A_{\tri} (t) \in \cE^{\prime}_{0}$, we obtain
$A^{}_{0} (t) A_{\tri} (t) = \nix$ and
$A_{\tri} (t)^2 = \nix$ from Fact~\ref{fact:ideal}. This implies
\begin{equation}\label{eq:powers}
    A(t)^{n} \, = \, \bigl( A^{}_{0} (t) + A_{\tri} (t) \bigr)^{n} \, = \,
    A^{}_{0} (t)^{n} + A_{\tri} (t) A^{}_{0} (t)^{n-1}
\end{equation}
for all $n\in\NN$ and $t\geqslant 0$. But this gives
$R(t) = R^{}_{\ts 0} (t) + R_{\tri} (t)$ with
\[
\begin{split}
  R_{\tri} (t) \, & = \sum_{n=1}^{\infty} \myfrac{(-1)^{n-1}}{n}
  A_{\tri} (t) A^{}_{0} (t)^{n-1} \quad \text{and} \\[2mm]
  R^{}_{\ts 0}(t) \, & = \sum_{n=1}^{\infty} \myfrac{(-1)^{n-1}}{n}
  A^{}_{0} (t)^{n} \, = \, \log \bigl( \one + A^{}_{0} (t)\bigr) \, =
  \, u(t) \ts Q^{}_{0}
\end{split} 
\]   
for sufficiently small $t$, where one then always has
$R_{\tri} (t) \in \cE^{\prime}_{0}$. Note that the latter property
will be preserved when an extension of $t$ beyond the circle of
convergence is possible, for instance via analytic continuation.

\begin{coro}\label{coro:log}
  For all sufficiently small\/ $t\geqslant 0$, the solution\/ $M(t)$
  from Proposition~$\ref{prop:Cauchy-pert}$ has the form\/
  $M(t) = \ee^{R(t)}$ with\/ $R(t) = u(t) \ts Q^{}_{0} + R_{\tri} (t)$
  and\/ $R_{\tri} (t) \in \cE^{\prime}_{0}$.  \qed
\end{coro}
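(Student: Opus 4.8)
The claim is essentially a repackaging of the derivation that precedes it, so the plan is to lay out that derivation as a clean chain of implications. First I would invoke Proposition~\ref{prop:Cauchy-pert}: for $t$ in some right neighbourhood of $0$ the solution reads $M(t) = \one + A(t)$ with $A(t) = A^{}_{0}(t) + A_{\tri}(t)$, where $A^{}_{0}(t) = \ee^{ u(t) \ts Q^{}_{0}} - \one \in \cA^{}_{\ts 0}$, $A_{\tri}(t) \in \cE^{\prime}_{0}$, and $\varrho^{}_{\nts A(t)} < 1$. Since $u(t) = \cO(t)$ as $t\searrow 0$, one also has $\lVert A^{}_{0}(t)\rVert \to 0$, and in particular $\varrho^{}_{\nts A^{}_{0}(t)} < 1$, for all sufficiently small $t$. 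Hence, on such an interval, the principal matrix logarithm of $M(t)$ exists and is represented by the absolutely convergent series $R(t) = \log\bigl( \one + A(t) \bigr) = \sum_{n\geqslant 1} \frac{(-1)^{n-1}}{n} A(t)^n$.

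The algebraic input is Fact~\ref{fact:ideal}: because $\cE^{\prime}_{0}$ is a two-sided ideal in $\cA^{}_{\ts 0}$ on which left multiplication by any element of $\cA^{}_{\ts 0}$ is annihilating, and $A^{}_{0}(t)\in\cA^{}_{\ts 0}$ while $A_{\tri}(t)\in\cE^{\prime}_{0}$, we get $A^{}_{0}(t) A_{\tri}(t) = \nix$ and $A_{\tri}(t)^2 = \nix$. A one-line induction then gives the power formula $A(t)^n = A^{}_{0}(t)^n + A_{\tri}(t)\, A^{}_{0}(t)^{n-1}$ for all $n\in\NN$: expanding the $n$-th power, every term containing $A_{\tri}(t)$ strictly to the right of some factor vanishes, and the only surviving cross term is the one with $A_{\tri}(t)$ in the leftmost slot. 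Substituting this into the log series and separating the two contributions — legitimate by absolute convergence for small $t$, since $\lVert A^{}_{0}(t)\rVert < 1$ and $\lVert A_{\tri}(t)\rVert$ is bounded — yields $R(t) = R^{}_{\ts 0}(t) + R_{\tri}(t)$ with $R^{}_{\ts 0}(t) = \sum_{n\geqslant 1} \frac{(-1)^{n-1}}{n} A^{}_{0}(t)^n = \log\bigl(\one + A^{}_{0}(t)\bigr)$ and $R_{\tri}(t) = \sum_{n\geqslant 1} \frac{(-1)^{n-1}}{n} A_{\tri}(t)\, A^{}_{0}(t)^{n-1}$. Every summand of the latter lies in $\cE^{\prime}_{0}$, as $A_{\tri}(t)\in\cE^{\prime}_{0}$ and $\cE^{\prime}_{0}$ is an ideal; and $\cE^{\prime}_{0}$ being a finite-dimensional, hence closed, subspace of $\cA^{}_{\ts 0}$, the limit $R_{\tri}(t)$ lies in $\cE^{\prime}_{0}$ as well. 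Finally, $\one + A^{}_{0}(t) = \ee^{ u(t) \ts Q^{}_{0}}$, and for $t$ small enough the eigenvalues of $u(t)\ts Q^{}_{0}$ lie in the strip $\{ z\in\CC : -\pi < \imag(z) < \pi \}$, so $u(t)\ts Q^{}_{0}$ is the principal logarithm of that matrix; hence $R^{}_{\ts 0}(t) = u(t)\ts Q^{}_{0}$ and $R(t) = u(t)\ts Q^{}_{0} + R_{\tri}(t)$ with $R_{\tri}(t)\in\cE^{\prime}_{0}$, as claimed.

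I do not expect a genuine obstacle here: once Proposition~\ref{prop:Cauchy-pert} and Fact~\ref{fact:ideal} are in hand, the argument is bookkeeping. The one point requiring a little care is the identification of the two resummed series with the respective \emph{principal} logarithms, which is exactly why the statement is confined to sufficiently small $t$, where $\varrho^{}_{\nts A(t)} < 1$ and $\varrho^{}_{\nts A^{}_{0}(t)} < 1$ hold simultaneously and the eigenvalue-strip condition is satisfied. Should one wish to extend the conclusion to larger $t$, one would argue by analytic continuation of $t\mapsto R(t)$, under which the closed-subspace membership $R_{\tri}(t)\in\cE^{\prime}_{0}$ is automatically preserved.
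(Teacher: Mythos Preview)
Your proposal is correct and follows essentially the same route as the paper's own derivation preceding the corollary: invoke Proposition~\ref{prop:Cauchy-pert} for the decomposition $A(t)=A^{}_{0}(t)+A_{\tri}(t)$ with $\varrho^{}_{\nts A(t)}<1$, use Fact~\ref{fact:ideal} to obtain $A^{}_{0}(t)A_{\tri}(t)=\nix=A_{\tri}(t)^2$ and hence the power formula \eqref{eq:powers}, split the log series accordingly, and identify $R^{}_{\ts 0}(t)=\log(\one+A^{}_{0}(t))=u(t)\ts Q^{}_{0}$. Your added remarks on the separate spectral-radius bound for $A^{}_{0}(t)$, on closedness of $\cE^{\prime}_{0}$ as a finite-dimensional subspace, and on the eigenvalue-strip condition for the principal logarithm are welcome clarifications but do not change the argument.
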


To determine $R_{\tri} (t)$, we employ tools from the Magnus
expansion; see the Appendix for a short summary and further
references. In view of the switched order of the matrices in our ODEs
due to the row sum convention for Markov matrices and generators, we
use the (twisted) \emph{adjoint} of two matrices,
\[
    \ad^{}_{A} (B) \, \defeq \, [B,A] \, = \, - [A, B] \ts .
\]
Here, we get
\[
\begin{split}
  \ad^{}_{R(t)} \bigl( Q(t)\bigr) \, & = \, \bigl[ \mu(t) \ts Q^{}_{0}
  + C_{\bs{q} (t)}, u(t) \ts Q^{}_{0} + R_{\tri} (t) \bigr] \\[1mm]
  & = \, -\mu(t) R_{\tri} (t) \ts Q^{}_{0} + u(t)
  \ts C_{\bs{q}(t)} Q^{}_{0}  \, = \, \bigl( u(t) \ts Q (t)
  - \mu(t) \ts R(t) \bigr) Q^{}_{0} \ts ,
\end{split}
\]
where all additional terms in the second step vanish due to
Eq.~\eqref{eq:easy}, while the last step uses
$R^{}_{\ts 0}(t) = u(t)\ts Q^{}_{0}$ from above. Next, we need to
calculate powers of the adjoint, where
$\ad^{n+1}_{A} \defeq \ad^{}_{A} \circ \ad^{n}_{A}$ for
$n\geqslant 1$. One can now repeat the above type of calculation
inductively, with the same kind of cancellations, to obtain
\begin{equation}\label{eq:ad-powers}
  \ad^{n}_{R(t)} \bigl( Q(t)\bigr) \, = \,
  \bigl( u(t) \ts Q(t) - \mu(t) R(t) \bigr)
  u(t)^{n-1} Q^{n}_{0}
\end{equation}
for $n\in\NN$, where it is clear from Fact~\ref{fact:ideal} that this
expression is always an element of $\cE^{\prime}_{0}$, as can be verified
by writing the right-hand side in terms of $C_{\bs{q}(t)}$ and $R_{\tri}(t)$.

Now, with \eqref{eq:ad-powers} and Eq.~\eqref{eq:dot-R} from the
Appendix, we get
\[
  \dot{R} (t) \, = \sum_{n=0}^{\infty} \frac{b_n}{n\ts !} \,
  \ad^{n}_{R(t)} \bigl( Q(t) \bigr) \, = \, Q(t) + \bigl( u(t) \ts
  Q(t) - \mu(t) R(t) \bigr) \frac{f( u(t) \ts Q^{}_{0} ) - \one}{u(t)}
\]
with the well-known meromorphic function
\[
     f(x) \, = \, \frac{x}{\ee^x - 1} \, = 
     \sum_{n=1}^{\infty} \frac{b_n}{n\ts !}  \, x^n ,
     \qquad \text{with} \; f(0) = 0 \ts ,
\]
where the $b_n$ denote the Bernoulli numbers, here with
$b^{}_{1} = - \frac{1}{2}$ and $b^{}_{2m+1}=0$ for all $m\in\NN$. The
power series of $f $ has $2\pi$ as its radius of convergence, so
$f(u(t)\ts Q^{}_{0})$ is well defined for small values of $t$, as is
$\bigl( f(u(t) \ts Q^{}_{0}) - \one\bigr)/u(t)$, where we assume
$u(t) =\int_{0}^{t} \mu (\tau) \dd \tau > 0$ for all $t>0$, which really
is a consequence of our assumptions on $\mu$.

Observing that $R(t) = R^{}_{\ts 0} (t) + R_{\tri} (t)$ with
$\dot{R}^{}_{\ts 0} (t) = \mu(t) \ts Q^{}_{0}$ leads to several
cancellations, one arrives at the inhomogeneous linear ODE
\[
    \dot{R}_{\tri} (t) + \myfrac{\mu(t)}{u(t)} \, R_{\tri} (t) \,
    \bigl( f(u(t)\ts Q^{}_{0}) - \one \bigr) \, = \,  C_{\bs{q}(t)}
    f(u(t) \ts Q^{}_{0} ) \ts ,
\]
which can again be solved by the standard methods used earlier. The
result is
\[
\begin{split}
  R_{\tri} (t) \, & = \int_{0}^{t} \nts C_{\bs{q}(\tau)} f(u(\tau)\ts
  Q^{}_{0}) \exp \Bigl( \, \int_{0}^{\tau} \nts
  \myfrac{\mu(\sigma)}{u(\sigma)} \bigl( f( u(\sigma) \ts Q^{}_{0}) -
  \one \bigr) \dd \sigma \Bigr)
  \dd \tau \\[2mm]
  & \qquad\qquad \cdot\exp \Bigl( - \! \int_{0}^{t} \nts
  \myfrac{\mu(\rho )}{u(\rho )} \bigl(
  f(u(\rho)\ts Q^{}_{0}) - \one \bigr) \dd \rho \Bigr) \\[2mm]
  & = \, \int_{0}^{t} C_{\bs{q}(\tau )} f(u(\tau)\ts Q^{}_{0}) \exp\Bigl(
  \, \int_{0}^{u(\tau)} \nts\nts \myfrac{f(\vartheta \ts
    Q^{}_{0})-\one}{\vartheta} \dd \vartheta\Bigr) \dd \tau \cdot
  \exp\Bigl( - \! \int_{0}^{u(t)} \nts\nts \myfrac{f(\eta \ts
    Q^{}_{0}) - \one}{\eta} \dd \eta \Bigr) ,
\end{split}
\]
where the second expression emerges from a standard substitution,
which is justified when $u$ is strictly increasing.

The expression for $R_{\tri} (t)$ can still be simplified further. One
idea how to achieve this comes from the formula
\begin{equation}\label{eq:Stamm}
     \int_{0}^{t} \myfrac{f(\tau) - 1}{\tau} \dd \tau \, = \, 
     -t + \log\myfrac{\ee^t - 1}{t} \, = \, - t - \log (f(t)) \ts ,
\end{equation}
which holds for $t\geqslant 0$. Carefully inspecting the corresponding
identities for matrix-valued integrals, one can come to the following
helpful identity.

\begin{lemma}\label{lem:integral}
  Let\/ $B \in \Mat(d,\RR)$ be fixed. Then, for sufficiently small\/
  $t\geqslant 0$, one has
\[
  \int_{0}^{t} \myfrac{f(\tau B) - \one}{\tau} \dd \tau \, = \,
  \log \bigl( f (t B)^{-1} \ee^{-t B} \bigr) ,
\]    
   where\/ $\log$ refers to the principal matrix logarithm.
\end{lemma}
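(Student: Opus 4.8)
The plan is to reduce the matrix identity to the scalar one in \eqref{eq:Stamm} by a spectral/analytic argument, rather than fighting with matrix-valued antiderivatives directly. First I would observe that both sides of the claimed identity are analytic functions of $t$ in a neighbourhood of $0$ (for the right-hand side, $f(tB)$ is invertible near $t=0$ since $f(0)=1$, so $f(tB)^{-1}\ee^{-tB}$ is close to $\one$ and its principal logarithm is well defined; for the left-hand side, $(f(\tau B)-\one)/\tau$ extends analytically to $\tau=0$ because $f(\tau B)-\one = \cO(\tau)$, so the integral is a genuine analytic matrix function of $t$). Both sides vanish at $t=0$, so it suffices to show their $t$-derivatives agree.

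The key computation is then to differentiate the right-hand side. Set $G(t) \defeq f(tB)^{-1}\ee^{-tB}$ and $L(t)\defeq \log G(t)$; I want $\dot L(t) = (f(tB)-\one)/t$. The natural route is: since $f(tB)$ and $\ee^{-tB}$ are both power series in the single matrix $tB$, everything in sight commutes — $B$, $f(tB)$, $\ee^{-tB}$, $G(t)$ and $L(t)$ all lie in the commutative algebra $\RR[B]$. In that commutative setting the logarithmic derivative is simply $\dot L = \dot G\, G^{-1}$, with no Magnus/BCH correction. A short calculation gives $\dot G = \bigl(-f(tB)^{-1}\dot f(tB) f(tB)^{-1}\bigr)\ee^{-tB} + f(tB)^{-1}(-B)\ee^{-tB} = -\bigl(f(tB)^{-1}\dot f(tB) + B\bigr) G(t)$, hence $\dot L(t) = -f(tB)^{-1}\dot f(tB) - B$. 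It remains to check that $-f(x)^{-1}f'(x) - x = (f(x)-1)/x$ as scalar power series (equivalently as functions of the commuting matrix $tB$), which is exactly the matrix form of \eqref{eq:Stamm}: indeed \eqref{eq:Stamm} says the scalar antiderivative of $(f(\tau)-1)/\tau$ is $-t-\log f(t)$, whose derivative is $-1 - f'(t)/f(t)$; multiplying through by the identity to emphasise we are working in $\RR[B]$ closes the loop. By analyticity this scalar identity transfers to $B$ for small $t$, matching the derivatives and hence the two sides themselves.

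The main obstacle is the justification that one may pass from the scalar identity to the matrix identity — i.e.\ that the functional-calculus substitution $x \mapsto tB$ is legitimate and that no non-commutative correction terms enter. This is handled cleanly by the commutativity remark above: every matrix appearing is a convergent power series in $tB$, the radius of convergence of $f$ is $2\pi$ so all series converge for $t$ small (how small depending on $\|B\|$ and the spectrum of $B$), and on the commutative Banach algebra generated by $B$ the principal logarithm, the exponential, and ordinary differentiation obey the same rules as their scalar counterparts, so the scalar primitive relation \eqref{eq:Stamm} lifts verbatim. A secondary, purely technical point is to confirm that the principal logarithm in the statement is the one obtained by this power-series construction near $t=0$; this follows because $G(0)=\one$ and $G$ is continuous, so $G(t)$ stays in the region where the principal logarithm is given by its Mercator series, which is what the computation uses.
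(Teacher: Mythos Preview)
Your proposal is correct and follows essentially the same route as the paper: both sides vanish at $t=0$, everything lives in the commutative algebra generated by $B$, and one checks that the $t$-derivatives agree by reducing to the scalar identity obtained from differentiating \eqref{eq:Stamm}. One slip to fix: where you write ``$-f(x)^{-1}f'(x) - x = (f(x)-1)/x$'' the $-x$ should be $-1$ (your $\dot L(t) = -f(tB)^{-1}\dot f(tB) - B$ becomes $(-f'(tB)/f(tB)-\one)B$ after the chain rule, and matching this against $(f(tB)-\one)/t = \tfrac{f(x)-1}{x}\big|_{x=tB}\cdot B$ gives the scalar identity with $-1$, exactly as your next sentence already states).
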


\begin{proof}
  Clearly, both sides evaluate to $\nix$ for $t=0$, and the spectrum
  of $t B$ lies inside the circle of convergence for the power series
  of $f$, as long as $t$ is small enough. Also, again for sufficiently
  small $t$, the matrix argument of the logarithm cannot have any
  negative eigenvalues, so the principal logarithm is well defined.

  Now, we need to check that both sides have the same derivative. On
  the left, one has
\[
    \myfrac{1}{t} \bigl( f(t B) - \one \bigr) \, = \,
    \myfrac{f(z) - 1}{z} \Big|_{z = t B} \cdot B \, = \,
     \myfrac{1+z-\ee^{z}}{z (\ee^{z}-1)}\Big|_{z=t B} \cdot B \ts ,
\]
with the usual understanding of matrix functions via converging power
series.  In comparison, the right-hand side gives
\[
   \myfrac{\dd}{\dd z} \log \myfrac{\ee^{-z}}{f(z)} \Big|_{z=t B} \cdot B \, = \,
   \myfrac{z \, \ee^{z}}{\ee^{z}-1} \myfrac{\dd}{\dd z} \myfrac{1-\ee^{-z}}{z}
   \Big|_{z=t B} \cdot B \, = \, 
   \myfrac{1+z-\ee^{z}}{z (\ee^{z}-1)}\Big|_{z=t B} \cdot B \ts ,
\]
which agrees with the previous expression and completes the argument.
\end{proof}

Using $f(t)\ee^t = f(-t)$ and inserting the formula from
Lemma~\ref{lem:integral} into our solution for $R_{\tri} (t)$ then
gives the significantly simpler expression
\begin{equation}\label{eq:much-simpler}
    R_{\tri}(t) \, = \int_{0}^{t} \! C_{\bs{q}(\tau)} \ee^{-u(\tau) \ts Q^{}_{0}}
    \dd \tau \cdot  f \bigl( - u(t) \ts Q^{}_{0}\bigr) .
\end{equation}
Let us sum up as follows.

\begin{theorem}\label{thm:general}
  Consider the Cauchy problem\/ $\dot{M} = M \ts Q$ with\/ $M(0)=\one$
  for the matrix family\/ $Q(t) = \mu(t) \ts Q^{}_{0} + C_{\bs{q}(t)}$
  from \eqref{eq:pert-gen}, where\/ $\mu$ is a positive scalar
  function and\/ $\bs{q}$ an\/ $\RR^d$-valued function with\/ $q(t)=0$
  for all\/ $t\geqslant 0$, both assumed continuous.  This problem has
  a unique solution in the form of the PBS.  For small enough\/ $t$,
  it also satisfies\/ $M(t) = \ee^{R(t)}$ with\/
  $R(t) = u(t) \ts Q^{}_{0} + R_{\tri} (t)$ and the\/
  $R_{\tri} (t) \in \cE^{\prime}_{0}$ from \eqref{eq:much-simpler},
  where\/ $u(t) = \int_{0}^{t} \mu (\tau) \dd \tau$.
    
  Further, when all\/ $Q(t)$ are Markov generators, the set\/
  $\{ M(t) : t\geqslant 0 \}$ defines a flow of Markov matrices. For
  all sufficiently small\/ $t$, the real logarithm\/ $R(t)$ is then a
  Markov generator, which means that\/ $M(t)$, for any fixed\/ $t$, is
  a Markov matrix that is also embeddable into a time-homogeneous
  Markov semigroup.  \qed
\end{theorem}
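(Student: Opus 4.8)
The plan is to collect the structural facts already in place and then add the two genuinely probabilistic statements. Existence and uniqueness are immediate: on any compact time interval the map $M\mapsto M\ts Q(t)$ is continuous in $t$ and globally Lipschitz in $M$, so the Picard--Lindel\"of theorem applies, while Proposition~\ref{prop:Cauchy-pert} already exhibits the unique solution as the compactly converging PBS. The representation $M(t)=\ee^{R(t)}$ with $R(t)=u(t)\ts Q^{}_{0}+R_{\tri}(t)$ for sufficiently small $t$ is exactly Corollary~\ref{coro:log}, combined with the Magnus-expansion computation that produced \eqref{eq:much-simpler} via Lemma~\ref{lem:integral}. So the first paragraph of the theorem needs no new argument beyond assembling these.

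I would next treat the flow of Markov matrices. For every $t\geqslant 0$ one has $Q(t)=\mu(t)\ts Q^{}_{0}+C_{\bs{q}(t)}\in\cA^{}_{0}$, since both summands have vanishing row sums; hence $Q(t)\ts\bs{1}^{\trans}=\bs{0}^{\trans}$, and differentiating $M(t)\ts\bs{1}^{\trans}$ along the Cauchy problem shows it is constant, equal to $M(0)\ts\bs{1}^{\trans}=\bs{1}^{\trans}$, so all row sums remain $1$. If, in addition, every $Q(t)$ is a rate matrix, non-negativity of $M(t)$ is the standard property of Kolmogorov forward flows with rate-matrix generators: on a short interval $\one+h\ts Q(t)$ is stochastic, so $M$ evolves inside the closed convex cone of non-negative matrices and remains there in the limit (see \cite{Norris}). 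In contrast to Lemma~\ref{lem:PB-step}, one cannot invoke the equal-input algebra directly here, because $Q^{}_{0}$ is an arbitrary rate matrix, but the argument just sketched is self-contained.

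The remaining, and main, step is to prove that $R(t)$ is itself a Markov generator for all sufficiently small $t$. Since $R_{\tri}(t)\in\cE^{\prime}_{0}$ has trace $0$ and the row sums of $u(t)\ts Q^{}_{0}$ vanish, the row sums of $R(t)$ vanish automatically, so the task reduces to showing that the off-diagonal entries of $R(t)=u(t)\ts Q^{}_{0}+R_{\tri}(t)$ are non-negative. Here I would argue entrywise from \eqref{eq:much-simpler}: expand $\ee^{-u(\tau)Q^{}_{0}}$ and $f\bigl(-u(t)\ts Q^{}_{0}\bigr)$ in their convergent power series in $Q^{}_{0}$, and feed in the rate-matrix constraints on the family, namely $(Q(\tau))_{ij}=\mu(\tau)\ts(Q^{}_{0})_{ij}+q^{}_{j}(\tau)\geqslant 0$ for $i\neq j$ together with $\sum_{k}q^{}_{k}(\tau)=0$. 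In the model case $d=2$ this collapses cleanly, because the parameter vector of $C_{\bs{q}(\tau)}$ is then proportional to the left eigenvector of $Q^{}_{0}$ for its nonzero eigenvalue; consequently $R_{\tri}(t)$ is again of equal-input type, and the constraints $0\leqslant q^{}_{1}(\tau)\leqslant\mu(\tau)$ force its summatory parameter into $[0,u(t)]$, which makes $R(t)$ a rate matrix for all $t$ in range. The obstacle I anticipate is carrying this bookkeeping through for $d\geqslant 3$, where the $\cE^{\prime}_{0}$-direction no longer reduces to a single eigenvector of $Q^{}_{0}$: one must exploit the cancellations that formula \eqref{eq:much-simpler} was engineered to produce, since the crude estimate $R(t)/t\to Q(0)$ does not decide the sign on off-diagonal positions where $Q(0)$ already lies on the boundary of the rate-matrix cone, and verify that weighting $C_{\bs{q}(\tau)}$ against $\ee^{-u(\tau)Q^{}_{0}}$ and then against $f\bigl(-u(t)\ts Q^{}_{0}\bigr)$ keeps every such entry on the correct side. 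Once $R(t)$ is known to be a rate matrix, $\{\ee^{sR(t)}:s\geqslant 0\}$ is a time-homogeneous Markov semigroup through $M(t)=\ee^{R(t)}$, which yields the asserted embeddability; everything else is assembly.
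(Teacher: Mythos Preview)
Your assembly for the first paragraph is exactly how the paper handles it: the theorem is stated with \qed\ immediately after the derivation running through Proposition~\ref{prop:Cauchy-pert}, Corollary~\ref{coro:log}, the Magnus computation of the iterated adjoints, and the simplification via Lemma~\ref{lem:integral} to \eqref{eq:much-simpler}. There is no separate proof in the paper beyond that derivation, so your ``assembly'' is the whole argument. Your treatment of the Markov-flow property (row sums preserved because $Q(t)\in\cA^{}_{0}$, non-negativity via the standard forward-equation argument) is correct and in fact more explicit than anything the paper writes out.

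Where your proposal is incomplete is precisely where the paper is also silent: the claim that $R(t)$ is a rate matrix for small $t$. You are right that the linearisation $R(t)/t\to Q(0)$ fails to decide off-diagonal entries at which $Q(0)$ already lies on the boundary of the generator cone, and the entrywise bookkeeping you outline for $d\geqslant 3$ is a plan rather than a proof. The paper does not address this point at all --- it simply asserts the conclusion as part of the summarising theorem. So you have not introduced a gap relative to the paper; you have identified one that was already there. If you want to pursue it, note that the second-order terms in $R_{\tri}(t)$ coming from $\ee^{-u(\tau)Q^{}_{0}}$ and from $f(-u(t)Q^{}_{0})$ cancel (both contribute $\pm\tfrac{1}{2}C_{\bs{q}}Q^{}_{0}$ after integration), so any boundary analysis must go to third order at least, and a clean general argument likely requires an additional hypothesis or a different route than raw entrywise expansion.
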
  

Let us comment on the condition with small $t$. In our derivation, we
have used the approach to matrix functions via convergent Taylor
series as in \cite[Thm.~4.7]{Higham}, which limits the eigenvalues of
the matrix argument in $f(\pm \ts u(t) \ts Q^{}_{0})$ to be less than
$2 \pi$ in modulus. However, $f$ is a meromorphic function on $\CC$,
with poles of first order on the imaginary axis, namely at
$2\ts n\pi\ii$ for all $0\ne n\in\ZZ$. As long as the spectrum of
$u(t) \ts Q^{}_{0}$ avoids these poles (which is the generic case),
the solution formula \eqref{eq:much-simpler} remains valid, as can be
seen via analytic continuation.

Let us make two consistency calculations. First, with
$Q^{}_{0} = \nix$ in \eqref{eq:pert-gen}, we get
$Q(t) \in \cE^{\prime}_{0}$ for all $t\geqslant 0$, and
\eqref{eq:much-simpler} simplifies to
\[
  R_{\tri} (t) \, = \int_{0}^{t} C_{\bs{q} (\tau)} \dd \tau \, = \,
  C_{\bs{x} (t)}
\]
with $\bs{x} (t) = \int_{0}^{t} \bs{q} (\tau) \dd \tau$. Since
$R(t) = R_{\tri} (t)$ in this case, we are back to
Eq.~\eqref{eq:trivial-case}.

A little less obvious is the case where $Q^{}_{0}$ is a constant-input
matrix.  Let us thus consider
$Q(t) = \mu(t)\ts Q^{}_{0} + C_{\bs{r} (t)}$ with
$Q^{}_{0} = J^{}_{d}$ and $r(t) \equiv 0$. Then,
$Q^{2}_{0} = - Q^{}_{0}$ and
$C_{\bs{r} (t)} Q^{}_{0} = - C_{\bs{r} (t)}$, both as a result of
Eq.~\eqref{eq:Q-product}. Consequently, for any function $\phi$ that
is analytic around $0$ and all $t,s \geqslant 0$, we get
\[
    C_{\bs{r} (t)}\ts  \phi \bigl(u(s)\ts Q^{}_{0} \bigr) \, = \, 
    \phi \bigl(- u(s) \bigr) \ts C_{\bs{r} (t)} \ts ,
\]
with the standard restriction on the arguments of $\phi$ in relation
to its radius of convergence, which can later be lifted by analytic
continuation.  With this, the formula for $R_{\tri} (t)$ gives
\[
\begin{split}
  R_{\tri} (t) \, & = \int_{0}^{t} C_{\bs{r}(\tau)} \exp \bigl(-
  u(\tau) \ts Q^{}_{0} \bigr)
  \dd \tau \cdot f \bigl( - u(t) \ts Q^{}_{0} \bigr) \\[2mm]
  & = \int_{0}^{t} \ee^{u(\tau)} C_{\bs{r}(\tau)} f \bigl(- u(t) \ts
  Q^{}_{0} \bigr) \dd \tau \, = \, f(u(t)) \int_{0}^{t} \ee^{u(\tau)}
  C_{\bs{r} (\tau)} \dd \tau \ts .
\end{split}    
\]
Since $R^{}_{0} (t) = u(t)\ts Q^{}_{0}$ and 
\[
     u(t) \, = \, f(u(t)) \bigl( \ee^{u(t)} - 1 \bigr) \, = 
    f(u(t)) \int_{0}^{t} \ee^{u(\tau)} \mu(\tau) \dd \tau \ts ,
\]
we thus get the simplified formula
\[
    R(t) \, = \, f(u(t)) \int_{0}^{t} \ee^{u(\tau)}
    \bigl( \mu(\tau) \ts Q^{}_{0} + C_{\bs{r} (\tau)} \bigr) \dd \tau
    \, = \, f(u(t)) \int_{0}^{t} \ee^{u(\tau)} Q(\tau) \dd \tau \ts ,
\]
which agrees with Eq.~\eqref{eq:weighted-integral}, as it must.
\smallskip

It is possible to generalise our result from Theorem~\ref{thm:general}
even a little further. To do so, we consider the (assumed continuous)
matrix family
\begin{equation}\label{eq:further}
    Q(t) \, = \, Q^{}_{0} (t) + C_{\bs{q}(t)}
\end{equation}
with $q(t) = \tr (C_{\bs{q}(t)}) \equiv 0$, where the
$Q^{}_{0}(t)\in\cA^{}_{\ts 0}$ define a commuting matrix family, that
is, they satisfy $[Q^{}_{0} (t), Q^{}_{0} (s)]=\nix$ for all
$t,s\geqslant 0$. We now define the integral
$R^{}_{0} (t) = \int_{0}^{t} Q^{}_{0} (\tau) \dd \tau$, which
satisfies $R^{}_{0} (0)=\nix$ and $[R^{}_{0} (t), Q^{}_{0} (s)]=\nix$
for all $t,s \geqslant 0$. The PBS for the Cauchy problem with the
matrix family from \eqref{eq:further} leads to
$I^{}_{1} (t) = R^{}_{0} (t) + Q^{(1)}_{\tri} (t)$, where one has
$Q^{(1)}_{\tri} (t) = \int_{0}^{t} C_{\bs{q}(\tau)} \dd \tau \in
\cE^{\prime}_{0}$, and then inductively to
\[
    I^{}_{n+1} (t) \, = \, \myfrac{R^{n+1}_{0} (t)}{(n+1)!}
    + Q^{(n+1)}_{\tri} (t)  \quad \text{with} \quad Q^{(n+1)}_{\tri} (t)
    \, = \int_{0}^{t} Q^{(n)}_{\tri} (\tau) \ts Q^{}_{0} (\tau) \dd \tau
    \in \cE^{\prime}_{0}
\]
for $n\in\NN$. The derivation of the leading term uses the integration
identity
\[
     \int_{0}^{t} R^{n}_{0} (\tau) \ts Q^{}_{0} (\tau) \dd \tau \, = \,
     \myfrac{R^{n+1}_{0} (t)}{n+1} \ts ,
\]
which holds for $n\in\NN_{0}$ and is a matrix-valued analogue of
Fact~\ref{fact:integral}. As $[R^{}_{0} (t), Q^{}_{0} (t)]=\nix$, the
claimed formula can be verified by differentiation, with
$\dot{R}^{}_{0} = Q^{}_{0}$.  As before, this leads to the solution of
the Cauchy problem in the form
$M(t) = \one + A^{}_{0} (t) + A_{\tri} (t)$ with
$A^{}_{0} (t) = \ee^{R^{}_{0} (t)} - \one$ and
$A_{\tri} (t) = \sum_{n\geqslant 1} Q^{(n)}_{\tri} (t)$, which is
compactly converging. Here, one has $A^{}_{0} (t) \in \cA^{}_{\ts 0}$
and $A_{\tri} (t) \in \cE^{\prime}_{0}$ for all $t\geqslant 0$, and we
get the same formula for the powers as in Eq.~\eqref{eq:powers}.

Setting $M(t) = \exp\bigl( R(t)\bigr)$, which is at least possible for
small $t$, one gets the decomposition
$R(t) = R^{}_{0} (t) + R_{\tri}(t)$, compare Corollary~\ref{coro:log},
again with
$R_{\tri} (t) = \sum_{n\in\NN} \frac{(-1)^{n-1}}{n} A_{\tri}(t)
A^{n-1}_{0} (t)$. Dropping the notation indicating explicit time
dependence for a moment, one can now calculate the iterated adjoint
using exactly the same algebraic steps as around
\eqref{eq:much-simpler} together with
$[R^{}_{0} (t), Q^{}_{0} (t)]=\nix$. This leads to
\[
   \ad^{\ts n}_{R} (Q) \, = \, (Q_{\bs{q}} R^{}_{0} - R_{\tri} Q^{}_{0}) 
   R^{n-1}_{0} \, = \, (Q R^{}_{0} - R \ts\ts Q^{}_{0}) R^{n-1}_{0}
\]
for $n\in\NN$, together with $\ad^{\ts 0}_{R} (Q) = Q$. This gives the
ODE for $R$ via Eq.~\eqref{eq:dot-R} as
\[
    \dot{R} \, = \sum_{n=1}^{\infty} \myfrac{b_n}{n\ts !} \, \ad^{n} _{R} (Q)
    \, = \, Q^{}_{0} + C_{\bs{q}} \ts f( R^{}_{0})
    - R_{\tri} Q^{}_{0} \ts\ts g(R^{}_{0} ) \ts ,
\]
with $g(z) = \frac{f(z)-1}{z}$. This $g$ is a meromorphic function on
$\CC$ with simple poles at the same places as $f$, and with
$g(0)=-\frac{1}{2}$.  Since $\dot{R}^{}_{0} = Q^{}_{0}$, we can split
off the part for $R_{\tri}$, which gives the inhomogeneous linear ODE
\[
    \dot{R}_{\tri} + R_{\tri} \ts Q^{}_{0} \ts\ts g(R^{}_{0})
    \, = \, C_{\bs{q}} \ts f(R^{}_{0}) \ts ,
\]
with the solution
\[
   R_{\tri} (t) \, = \int_{0}^{t} \! C_{\bs{q}(\tau)} \ts
   f\bigl( R^{}_{0} (\tau ) \bigr) \exp \Bigl(\, \int_{0}^{\tau}\!\!
   Q^{}_{0} (\sigma) \ts\ts g \bigl( R^{}_{0} (\sigma) \bigr) \dd
   \sigma \Bigr) \dd \tau \cdot \exp \Bigl(- \! \int_{0}^{t}\!
   Q^{}_{0} (\rho) \ts\ts g \bigl( R^{}_{0} (\rho) \bigr) \dd
   \rho \Bigr) .
\]
With $h(z) = \int_{0}^{z} g(x) \dd x = - z - \log ( f(z))$, which
satisfies $h(0)=0$, and Eq.~\eqref{eq:Stamm}, we then get
\[
    \int_{0}^{t} Q^{}_{0} (\tau) \ts\ts g\bigl( R^{}_{0} (\tau) \bigr)  
    \dd \tau \, = \, h \bigl( R^{}_{0} (t)\bigr) ,
\]
as can easily be verified by differentiation. This is a variant of
Lemma~\ref{lem:integral}, which now gives
\[
    \exp \bigl( {\pm \ts h (R^{}_{0} (t))} \bigr) \, = \, 
    \exp \bigl( {\mp \ts R^{}_{0} (t)} \bigr) f ( R^{}_{0} 
    (t) )^{\mp\ts 1} .
\]
Inserting this into the solution formula for $R_{\tri} (t)$,
and using $f(x) \ee^x = f(-x)$ again, we obtain
\begin{equation}\label{eq:final-R}
   R_{\tri} (t) \, = \int_{0}^{t} C_{\bs{q} (\tau)} \ee^{- R^{}_{0} (\tau)} 
   \dd \tau \cdot f\bigl( - R^{}_{0} (t)\bigr) .
\end{equation}
In principle, $R_{\tri} (t)$ is of the form $C_{\bs{x} (t)}$ for some
suitable function $\bs{x}$. However, no simple general formula seems
possible, unless one makes further assumptions on $Q^{}_{0}$.  We have
thus derived the following extension of Theorem~\ref{thm:general}.

\begin{coro}
  Consider the Cauchy problem\/ $\dot{M} = M \ts Q$ with\/ $M(0)=\one$
  for the matrix family\/ $\{ Q(t): t \geqslant 0 \}$ of
  Eq.~\eqref{eq:further}, with\/ $Q^{}_{0} (t)$ being commuting
  matrices from\/ $\cA^{}_{\ts 0}$ and\/ $C_{\bs{q} (t)}$ having zero
  trace. Then, at least for small\/ $t$, the solution is of the form\/
  $M(t) = \exp ( R(t) )$ with\/
  $R(t) = R^{}_{0} (t) + R_{\tri} (t)$, where\/
  $R^{}_{0} (t) = \int_{0}^{t} Q^{}_{0} (\tau) \dd \tau$ and\/
  $R_{\tri} (t)$ is the matrix from \eqref{eq:final-R}.  \qed
\end{coro}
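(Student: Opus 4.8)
The plan is to follow the three-stage route that produced Theorem~\ref{thm:general}, now with the single scalar rate $\mu$ replaced by the commuting family $Q^{}_{0}(t)$ of zero-row-sum matrices: solve the Cauchy problem through the Peano--Baker series (PBS) and read off the additive splitting of $M(t)$; then pass to the principal matrix logarithm and set up the Magnus ODE for $R(t)$; and finally solve the resulting inhomogeneous linear ODE for the ideal part. Concretely, one expands $M(t) = \one + \sum_{n\geqslant 1} I^{}_{n}(t)$ from the PBS, which converges compactly by \cite[Thm.~1]{BSch}. Writing $R^{}_{0}(t) = \int_{0}^{t} Q^{}_{0}(\tau)\dd\tau$, the hypothesis $[Q^{}_{0}(t),Q^{}_{0}(s)]=\nix$ gives $[R^{}_{0}(t),Q^{}_{0}(s)]=\nix$, so the matrix analogue of Fact~\ref{fact:integral}, $\int_{0}^{t} R^{n}_{0}(\tau)\ts Q^{}_{0}(\tau)\dd\tau = \frac{1}{n+1}R^{n+1}_{0}(t)$, is at hand. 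Combining this with $Q^{}_{0}\ts C = \nix$ and $C\ts C' = \nix$ for $C,C'\in\cE^{\prime}_{0}$ (Fact~\ref{fact:ideal}), an induction on $n$ yields $I^{}_{n+1}(t) = \frac{1}{(n+1)!}R^{n+1}_{0}(t) + Q^{(n+1)}_{\triangle}(t)$ with $Q^{(n+1)}_{\triangle}(t) = \int_{0}^{t} Q^{(n)}_{\triangle}(\tau)\ts Q^{}_{0}(\tau)\dd\tau \in \cE^{\prime}_{0}$, every mixed term being annihilated by one of those two relations. Summing gives $M(t) = \one + A^{}_{0}(t) + A_{\triangle}(t)$ with $A^{}_{0}(t) = \ee^{R^{}_{0}(t)}-\one \in \cA^{}_{\ts 0}$ and $A_{\triangle}(t) = \sum_{n\geqslant 1} Q^{(n)}_{\triangle}(t) \in \cE^{\prime}_{0}$, again compactly convergent; since $A(0)=\nix$, the spectral radius of $A(t) = M(t)-\one$ is below $1$ for all sufficiently small $t$, so $R(t) \defeq \log(\one + A(t))$ is the principal logarithm and $M(t) = \ee^{R(t)}$ there.

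Next I would use Fact~\ref{fact:ideal} again: $A^{}_{0}(t)A_{\triangle}(t)=\nix$ and $A_{\triangle}(t)^{2}=\nix$, whence $A(t)^{n} = A^{}_{0}(t)^{n} + A_{\triangle}(t)A^{}_{0}(t)^{n-1}$ for all $n\in\NN$, exactly as in \eqref{eq:powers}. Feeding this into the logarithmic series splits $R(t) = R^{}_{0}(t) + R_{\triangle}(t)$ with $R^{}_{0}(t)=\log(\one+A^{}_{0}(t)) = \int_{0}^{t}Q^{}_{0}(\tau)\dd\tau$ and $R_{\triangle}(t) = \sum_{n\geqslant 1}\frac{(-1)^{n-1}}{n}A_{\triangle}(t)A^{}_{0}(t)^{n-1}\in\cE^{\prime}_{0}$. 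The key step is the inductive evaluation of the iterated twisted adjoint: using $\dot{R}^{}_{0}=Q^{}_{0}$, $[R^{}_{0}(t),Q^{}_{0}(t)]=\nix$, and $R_{\triangle}(t),C_{\bs{q}(t)}\in\cE^{\prime}_{0}$ together with the cancellations around \eqref{eq:easy}, one obtains $\ad^{n}_{R(t)}(Q(t)) = \bigl(Q(t)R^{}_{0}(t)-R(t)\ts Q^{}_{0}(t)\bigr)R^{}_{0}(t)^{n-1}$ for $n\in\NN$, with $\ad^{0}_{R}(Q)=Q$. Substituting into the Magnus ODE \eqref{eq:dot-R}, $\dot{R} = \sum_{n\geqslant 0}\frac{b_n}{n!}\ad^{n}_{R}(Q)$, and recognising $f(z)=z/(\ee^{z}-1)$ and $g(z)=(f(z)-1)/z$ as the relevant generating functions, gives $\dot{R} = Q^{}_{0} + C_{\bs{q}}\ts f(R^{}_{0}) - R_{\triangle}\ts Q^{}_{0}\ts g(R^{}_{0})$.

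Because $\dot{R}^{}_{0}=Q^{}_{0}$, subtracting that contribution leaves the inhomogeneous linear ODE $\dot{R}_{\triangle} + R_{\triangle}\ts Q^{}_{0}\ts g(R^{}_{0}) = C_{\bs{q}}\ts f(R^{}_{0})$ with $R_{\triangle}(0)=\nix$, which I would solve by variation of constants. The integrating factor is $\exp\bigl(\int_{0}^{t}Q^{}_{0}(\rho)\ts g(R^{}_{0}(\rho))\dd\rho\bigr)$, and the variant of Lemma~\ref{lem:integral} --- namely $\int_{0}^{t}Q^{}_{0}(\tau)\ts g(R^{}_{0}(\tau))\dd\tau = h(R^{}_{0}(t))$ with $h(z)=-z-\log f(z)$, which follows by differentiation from $\dot{R}^{}_{0}=Q^{}_{0}$ and the commutativity using \eqref{eq:Stamm} --- rewrites that factor as $\ee^{\mp R^{}_{0}(t)}f(R^{}_{0}(t))^{\mp 1}$. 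One last use of $f(x)\ee^{x}=f(-x)$ collapses the double integral to the claimed expression \eqref{eq:final-R}, and $R_{\triangle}(t)\in\cE^{\prime}_{0}$ is preserved throughout, completing the identification $M(t)=\exp(R^{}_{0}(t)+R_{\triangle}(t))$ for small $t$.

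I expect the two inductions --- the one for the PBS terms $I^{}_{n}(t)$ and the one for the iterated adjoints $\ad^{n}_{R}(Q)$ --- to be the main obstacle, since both succeed only by repeatedly invoking the ideal identity $\cA^{}_{\ts 0}\ts\cE^{\prime}_{0} = \{\nix\}$ of Fact~\ref{fact:ideal} together with $[R^{}_{0}(t),Q^{}_{0}(s)]=\nix$ to annihilate every unwanted cross term; getting the bookkeeping of which factor lies in $\cA^{}_{\ts 0}$ and which in $\cE^{\prime}_{0}$ exactly right at each step is what makes the closed form possible. A secondary point to watch is convergence: the power series of $f$ and $g$ have radius $2\pi$, which forces the restriction to small $t$; since $f$ and $g$ are meromorphic with simple poles only at $2\ts n\pi\ii$, $n\in\ZZ\setminus\{0\}$, analytic continuation then extends the formula as far as $R^{}_{0}(t)$ stays clear of those poles, exactly as in the comment following Theorem~\ref{thm:general}.
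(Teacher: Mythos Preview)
Your proposal is correct and follows essentially the same approach as the paper's own derivation: the PBS computation of $I_n(t)$ via the matrix analogue of Fact~\ref{fact:integral}, the splitting $R = R^{}_{0} + R_{\tri}$ from the logarithmic series using \eqref{eq:powers}, the iterated-adjoint formula $\ad^{n}_{R}(Q) = (Q R^{}_{0} - R\ts Q^{}_{0})R^{n-1}_{0}$, the resulting ODE $\dot{R}_{\tri} + R_{\tri}\ts Q^{}_{0}\ts g(R^{}_{0}) = C_{\bs{q}}\ts f(R^{}_{0})$, and its solution via the antiderivative $h(z) = -z - \log f(z)$ together with $f(x)\ee^{x} = f(-x)$ all match the paper step for step.
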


One obvious special case emerges via $\bs{q}(t)\equiv \bs{0}$. Then,
we have $R_{\tri} \equiv \nix$, and the solution boils down to
\[
    M(t) \, = \exp \bigl( R^{}_{0} (t) \bigr) \, = \,
    \exp \Bigl( \, \int_{0}^{t} Q^{}_{0} (\tau) \dd \tau \Bigr)
\]
as it must, because this is the easy case of commuting matrices.


\appendix
\section{Peano--Baker series and Magnus expansion}

Here, we give a quick summary of two helpful tools for the solution of
non-autonomous matrix-valued Cauchy problems of the form
\begin{equation}\label{eq:A1}
     \dot{X} (t) \, = \, X(t) \ts A(t) \quad \text{with} \quad 
     X(t^{}_{0}) \, = \, X^{}_{0} \ts ,
\end{equation}
where we consider the forward flow for $t\geqslant t^{}_{0}$. We use
this version (with $A(t)$ on the right) in view of the applications to
Markov flows.  In the simple case where $[ A(t),A(s)]=\nix$ for all
$t,s\geqslant t^{}_{0}$, which includes the case that $A(t)$ is a
constant matrix, one gets the solution as
\begin{equation}\label{eq:A2}
  X(t) \, = \, X^{}_{0} \exp \Bigl( \, \int_{t^{}_{0}}^{t}
  A(\tau) \dd \tau \Bigr),
\end{equation}
with uniqueness under the usual constraints on $A(t)$. This solution
is natural from the \emph{Volterra integral equation} point of view,
where \eqref{eq:A1} is replaced by the integral version
\begin{equation}\label{eq:A3}
  X(t) \, = \, X^{}_{0} + \int_{t^{}_{0}}^{t} X(\tau)
  \ts A(\tau) \dd \tau \ts .
\end{equation}
When $A(t) = A$ for all $t$, the standard \emph{Picard
  iteration} leads to the well-known formula
\[
  X(t) \, = \, X^{}_{0} \Bigl( \one + (t-t^{}_{0}) A +
  \myfrac{(t-t^{}_{0})^2}{2} A^2 + \myfrac{(t-t^{}_{0})^3}{6} A^3 +
  \ldots \Bigr) \, = \, X^{}_{0} \ee^{(t-t^{}_{0}) A},
\]
which is a simple special case of \eqref{eq:A2}.

The situation becomes more complicated when the matrices $A(t)$ no
longer commute. There are still two helpful approaches, namely the
Peano--Baker series (PBS, which emerges from a careful application of
the Picard iteration) and the Magnus expansion (ME, which is related
to the Baker--Campbell--Hausdorff formula and employs some techniques
from Lie theory).

Let us begin with the PBS. Here, one finds a compactly converging
series representation of the solution in the form
$X(t) = X^{}_{0} \cdot \sum_{n=0}^{\infty} \, I_{n} (t) $ with
\[
   I^{}_{0} (t) \, = \, \one
   \quad \text{and} \quad I_{n+1} (t) \, = \int_{t^{}_{0}}^{t}
   I_{n} (\tau) A(\tau) \dd \tau \quad \text{for } n\in\NN_{0} \ts ;
\] 
see \cite{BSch} and references therein for background and proofs, and
\cite{BS3} for a formulation in our present context and a comparison
with the time-ordered exponential used in physics, where it is
sometimes called the \emph{Dyson series}.  While it is rare that one
can calculate the $I_n$ explicitly (unless the $A(t)$ commute), the
PBS is still useful for structural insight, in particular if the
matrices $A(t)$ come from an algebra.
\smallskip

Another tool is the ME, which approaches linear ODEs via an
exponential solution; see \cite{ME} and references therein for an
extensive exposition. Here, we again look at the matrix-valued 
Cauchy problem from \eqref{eq:A1}
where the $A(t)$ constitute some (sufficiently nice) matrix family,
but need \emph{not} commute. The flow, at least for small times, is of
the form $X(t) = X^{}_{0} \exp\bigl( R (t^{}_{0} , t)\bigr)$, where we
now simply write $R(t)$ and take $t^{}_{0} = 0$ for convenience.

The Poincar\'{e}--Hausdorff identity from matrix groups now states
that 
\[
   \ee^{-R (t)} \myfrac{\dd}{\dd t} \ee^{R(t)} \, = \, A (t) \ts ,
\]
from the left-hand side of which one can derive an expression for
$\dot{R} (t)$ in the form
\begin{equation}\label{eq:dot-R}
  \dot{R}(t) \, = \, f \bigl(\ad^{}_{R(t)} \bigr) \bigl( A (t)\bigr) \ts ,
\end{equation}
where $f$ is the meromorphic function defined by
$f(x) = \frac{x}{\ee^{x}-1}$ and $\ad^{}_{C} (B) \defeq [B,C]$. This
unusual (twisted) version of the standard adjoint,
$\mathrm{ad}^{}_{C} (B) = [C,B]$, is taken to match the order of
matrix multiplication we have used in the ODE \eqref{eq:A1}. Then,
powers of $\ad$ are defined recursively, so $\ad^{\ts 0}_{C} (B) = B$
and $\ad^{n+1}_{C} (B) = \bigl[ \ad^{n}_{C} (B), C \bigr]$ for
$n\geqslant 0$. The operator in \eqref{eq:dot-R} is then defined via
the power series of $f$ around $0$, which is
\[
     f(x) \, = \sum_{n=0}^{\infty} \myfrac{b_n}{n\ts !} \, x^n \, = \, 
     1 - \myfrac{x}{2}  + \myfrac{x^2}{12} + \cO (x^4) \ts ,
\]
where the $b_n$ are the Bernoulli numbers.

While the right-hand side of \eqref{eq:dot-R} usually cannot be
calculated in closed terms, and is then employed approximately via
suitable truncations, this paper has examined several special
situations where the series can be worked out, and then admits the
exact computation of $R(t)$ via an explicit integration step.

\section*{Acknowledgements}

The authors thank an anonymous referee for a number of useful
suggestions, which helped us to improve our presentation. This work
was supported by the German Research Foundation (DFG, Deutsche
Forschungsgemeinschaft) under \mbox{SFB 1283/2 2021 -- 317210226}.
\medskip

\end{document}